\documentclass[11pt]{amsart}
\usepackage[colorlinks,citecolor=red,urlcolor=blue,bookmarks=false,hypertexnames=true]{hyperref} 
\usepackage{amsfonts}
\usepackage{dsfont}
\usepackage{amssymb}
\usepackage{mathrsfs}
\usepackage{nicefrac,eufrak}
\usepackage{bbm}
\usepackage{enumerate}
\usepackage{url,bm}
\usepackage{color}      % Need the color package
\usepackage{capt-of}
\usepackage{graphicx}
\usepackage{cleveref}

\textwidth16cm
\topmargin0cm

\oddsidemargin0cm
\evensidemargin0cm
\textheight22.5cm
%%%%%%%%%%%%%%%%%%%%%%%%%

\parskip 2pt

\sloppy

\newcommand{\R}{\mathbb{R}}
\newcommand{\N}{\mathbb{N}}
\newcommand{\C}{\mathbb{C}}
\newcommand{\Z}{\mathbb{Z}}
\newcommand{\Q}{\mathbb{Q}}
\newcommand{\F}{\mathcal{F}}

\newcommand{\V}{\mathcal{V}}

\newcommand{\M}{\mathscr{M}}
%Tau
\newcommand{\B}{\mathcal{B}}
\newcommand{\calA}{\mathcal{A}}
\newcommand{\calH}{\mathcal{H}}

\newcommand{\calC}{\mathcal{C}}

\newcommand{\calU}{\mathcal{U}}

\newcommand{\g}{\gamma}

\newcommand{\La}{\Lambda}

\newcommand{\wh}[1]{\widehat{#1}}

% Emily's
\newcommand{\Aut}{\operatorname{Aut}}

\newcommand{\rar}{\rightarrow}
\newcommand{\bF}{\mathbb{F}}

\newcommand{\p}{\varphi}

\newtheorem{theorem}{Theorem}[subsection]
\newtheorem{lemma}[theorem]{Lemma}
\newtheorem{corollary}[theorem]{Corollary}
\newtheorem{proposition}[theorem]{Proposition}

\newtheorem{remark}[theorem]{Remark}
\newtheorem{example}[theorem]{Example}

\newtheorem{definition}[theorem]{Definition}

\begin{document}

\title{The density of Gabor systems in expansible locally compact abelian groups }
	
	\let\thefootnote\relax\footnote{2020 {\it Mathematics Subject Classification:} Primary 47A15, 43A70, 42C15, 42C40.
		
		{\it Keywords:} Gabor systems, LCA groups, density, expansive automorphism, Bessel sequence, frames.}

	\author{E. King}
	\address{ Mathematics Department, Colorado State University, Fort Collins, CO, USA}
	\email{emily.king@colostate.edu}
	
	\author{R. Nores}
	\address{ Departamento de Matem\'atica, Universidad de Buenos Aires,
		Instituto de Matem\'atica "Luis Santal\'o" (IMAS-CONICET-UBA), Buenos Aires, Argentina}
	\email{rnores@dm.uba.ar}

	\author{V. Paternostro}
	\address{ Departamento de Matem\'atica, Universidad de Buenos Aires,
		Instituto de Matem\'atica "Luis Santal\'o" (IMAS-CONICET-UBA), Buenos Aires, Argentina}
	\email{vpater@dm.uba.ar}

	\begin{abstract}
		
		We investigate the reproducing properties of Gabor systems within the context of expansible groups. These properties are established in terms of density conditions. The concept of density that we employ mirrors the well-known Beurling density defined in Euclidean space, which is made possible due to the expansive structure. Along the way, for groups with a compact open subgroup, we demonstrate that modulation spaces are continuously embedded in Wiener spaces. Utilizing this result, we derive the Bessel condition of Gabor systems. We also provide a straightforward proof of the density result for Gabor frames, utilizing a comparison theorem for coherent frames.
		
	\end{abstract}

	\maketitle

%%%%%%%%%%%%%%%%%%%%%%%%%%
\section{Introduction}

A Gabor system is a set of functions in $L^2(\mathbb R^d)$ that are obtained by translating a single window in time and frequency along a set $\Lambda\subseteq \mathbb R^{2d}$. Their structure make them of particular importance in many applications such as wireless communication,  analysis and description of speech or music signals, and more (see, e.g., \cite{GRO, GH99, SH03}).
Because of this,  it is important to study their reproducing properties, that is, to understand which properties of the generating window and the set $\Lambda$ of time-frequency shifts guarantee the Gabor system to be an orthonormal basis, a Riesz basis, a frame, or a Bessel sequence. When the set $\Lambda$ does not have an algebraic structure, these reproducing properties are stated in terms of its Beurling density.

Consider a  set $\Lambda \subset \R^d$. The upper and lower Beurling density of $\Lambda$, denoted by $D^+(\Lambda)$ and $D^-(\Lambda)$, respectively, captures its asymptotic behavior within balls of varying radii. In mathematical terms, they are given by:
$$D^+(\Lambda)=\limsup_{r\to\infty} \sup_{x\in\R^d}\frac{\#(\Lambda\cap B_r(x))}{r^d},\qquad\textrm {
and }
\qquad D^-(\Lambda)=\liminf_{r\to\infty} \inf_{x\in\R^d}\frac{\#(\Lambda\cap B_r(x))}{r^d},$$
where $B_r(x)$ denotes the ball centered in $x$ with radius $r$ and $\#$ indicates the cardinality of a set.
A thorough survey on the results about density of Gabor systems is \cite{H07}.

Beurling density is a concept that is also related to conditions for sampling and interpolation. See, for example, Landau's result in \cite{Lan67}. In order to explore the validity of Landau's result in the context of locally compact abelian groups that are compactly generated, in \cite{GK08} Gr\"ochenig, Kutyniok, and Seip provide a definition of Beurling density by means of a comparison with a canonical lattice of reference. 
We refer to  \cite{AAC15} for related results on the existence of sampling and interpolation sets near to the critical density in LCA groups.

In this paper we focus on studying Gabor systems in terms of density within the framework of expansible locally compact abelian groups, that is, locally compact abelian (LCA) groups which have a compact open subgroup and an expansive automorphism. (See Definition \ref{def:expansive} for more details). One example of a such an expansible LCA group is the space of $p$-adic numbers $\mathbb Q_p$.

This type of LCA group does not possess any lattices, and thus, the notion of density given in \cite{GK08} needs to be reformulated. However, the presence of the expansive automorphism allows us to generalize the concept of density in the spirit of Beurling density in $\mathbb R^d$. A similar approach was given in \cite{SK20}.

Given an expansible LCA group $G$, a window $\varphi\in L^2(G)$ and a set $\Lambda\subseteq G\times \wh G$, where $\wh G$ denotes the Pontryagin dual group of $G$,  we explore whether or not the Gabor system generated by $\varphi$ with time-frequency shifts along $\Lambda$, $S(\varphi, \Lambda)$,  is a Bessel sequence or a frame of $L^2(G)$ depending on the density of $\Lambda$.

We first  move to a more abstract setting where we deal with unitary and projective representations of a locally compact group having a compact open subgroup. We  based our analysis in the notion of modulation spaces introduced in \cite{fe83-4} (see also \cite{FG1, FG2,fe03}). We prove that the generalized wavelet transform (voice transform) induced by the (unitary or projective) representation continuously maps the  modulation space of order $p$ into the Wiener space $W(\calC, \ell^p)$. We then apply these results to the case were the projective representation is the one given by time-frequency translations, resulting in Gabor systems.

Returning to Gabor systems in expansive groups, we demonstrate that if $S(\varphi, \Lambda)$ is a Bessel sequence in $L^2(G)$, then $\Lambda$ must have finite density. Conversely, when $\Lambda$ has finite density and $\varphi$ exhibits some decay, $S(\varphi, \Lambda)$ is a Bessel sequence. This latter result leverages the continuous embedding of modulation spaces into Wiener spaces.

Our findings serve as analogues for expansive LCA groups to \cite[Theorem 3.1]{CDH99} and \cite[Theorem 12]{H07}, which are set in the Euclidean context.

Finally, we derive a density result for Gabor systems, providing a straightforward proof based on the comparison theorem \cite[Theorem 4]{GK08}.

The article is structured as follows: In Section \ref{sec:preliminaries}, we recall certain definitions about LCA groups  and  expansive automorphisms as well as what we will need about frame theory.
We investigate in Section \ref{sec:density} the concept of density. We establish that it is (essentially) independent of the chosen automorphism used to define the density and show an equivalent condition for a set to have finite upper density. 
Moving to Section \ref{sec:modulation-spaces}, we use  the theory of modulation spaces introduced by  Feichtinger \cite{fe83-4} and developed further by Feichtinger and Gr\"ochenig in \cite{FG1, FG2}. When the underlying locally compact group has a compact open   subgroup, we establish one of the pivotal results of this work, namely, the continuous inclusion of modulation spaces into Wiener spaces through the generalized wavelet transform. Finally, in Section \ref {sec:Gabor} we prove that the classical properties of Gabor systems hold true in our context.

%%%%%%%%%%%%%%%%%%%%%%%%%%%
\section{Preliminaries}\label{sec:preliminaries}
In this section we fix the context where we will work in. We will also recall some aspects of frame theory that we will need.

\subsection{LCA groups}
A \emph{locally compact abelian group} (\emph{LCA group}) $G$  is an abelian group which is also a locally compact topological space such that both multiplication and inversion are homeomorphisms of the space. We will always assume that the topology is Hausdorff.   See, e.g., \cite{HR69} for a general reference

Given an LCA group $G$ written additively, we denote by $\wh G$ its Pontryagin  dual group. By $m_G$ we denote 
        a Haar measure associated to $G$ (with the desired normalization defined below). Since the dual of the dual group is topologically isomorphic to the original group, for  $\xi\in\wh G$ and $x\in G$ we write $\langle x,\xi\rangle $ to indicate the character $\xi$ applied to $x$ (i.e. $\xi(x)$) or the character  $x$ applied to $\xi$. For a subgroup  $H\subseteq G$, its annihilator is denoted by $H^\perp$ and is defined as 
$H^\perp=\{ \xi\in\wh G:\, \langle h,\xi\rangle=1, \,\forall \,h\in H\}$. It is well known that if $H$ is closed, $H^\perp$ is a closed subgroup of $\wh G$. 
  
For a closed subgroup $H$ of $G$, the dual of the quotient group $G/H$,  $\widehat{G/H}$ is algebraically and  topologically isomorphic to $H^\perp$ and $\wh H$ is algebraically and  topologically isomorphic to $\wh G/H^\perp$.

When $H\subseteq G$ is a compact open subgroup, then so is 
$H^\perp\subseteq\wh G$. As a consequence,   the quotients $G/H$ and 
$\wh G/H^\perp$ are discrete abelian groups. 

For an LCA group $G$ with a compact open subgroup $H$, we consider the 
following normalization of the Haar measures involved: we fix $m_G$ such that $m_G(H)=1$ and $m_{\wh G}(H^\perp)=1$. As explained in 
\cite[Comment (31.1)]{HR69}, this choice guarantees that the Fourier transform between $L^2(G)$ and $L^2(\wh G)$ is an isometry. We take $m_H=m_G\mid_H$, 
 $m_{\wh H}=m_{\wh G}\mid_{H^\perp}$ and $m_{G/H}$ and $m_{\wh G/H^\perp}$ to be the counting measures. Then, the Fourier transforms  between  $L^2(H)$ and 
$L^2(\wh G/H^\perp)$ and between $L^2(G/H)$ and $L^2(H^\perp)$ are isometries.

\subsection{Expansive automorphisms}
Let $G$ be an LCA group. The group of  homeomorphic automorphisms of $G$ into itself 
is  denoted by $\Aut(G)$. For a given $A\in \Aut(G)$, the measure $\mu_A$ defined by 
$ \mu_A(U)=m_G(AU)$, where $U$ is a Borel set of $G$, is a non-zero Haar measure on $G$. Therefore, there is a unique positive number $|A|$, the so-called {\it modulus of $A$}, such that $\mu_A=|A|m_G$.

For $A\in \Aut(G)$, there is an adjoint $A^* \in \Aut(\wh G)$ defined as $\langle Ax,\g\rangle=\langle x, A^*\g\rangle$ for all $x\in G$ and $\g\in\wh G$. It holds that $|A^*|=|A|$.

We next present the definition of expansive automorphisms as given in \cite{BEN04}, where they were used to define a wavelet theory over local fields. See also \cite{SK20}.

\begin{definition}\cite[Definition 2.5]{BEN04}\label{def:expansive}
Let $G$ be an LCA group and $H\subseteq G$ a compact open subgroup, and let $A\in \Aut(G)$.  We say that $A$ is \emph{expansive with respect to $H$} if the following two conditions hold:
\begin{enumerate}
\item $H\subsetneq AH$;
\item $\bigcap_{n\leq 0}A^nH=\{0\}.$
\end{enumerate}
When $H$ is fixed or clear from the context, we will simply say that $A$ is expansive. 
\end{definition}

There exist many groups $G$ with expansive automorphism $A$. We now give several examples.

\begin{example}\cite[Example 2.10]{BEN04}  Let $p$ be a prime number.  Define the $p$-adic valuation over $\Q$ as $|p^rx|_p=p^{-r}$ for all $r\in\Z$ and all $x\in\Q$ such that the numerator and denominator of $x$ are both relatively prime to $p$.  Then $\Q_p$ is the completion of $\Q$ with respect to the $p$-adic valuation.  Each element of $\Q_p$ may be represented as a Laurent series 
$$\Q_p=\left\{ \sum_{n\geq n_0} a_np^n: \, n_0\in\Z\,\,\mathrm{and}\,\, a_n\in\{0,1,\hdots,p-1\}\right\},$$
where addition ``carries'' rather than is modular.  That is,
\[
(p-1)p + 1p = p^2 \neq 0p.
\] 
Then $\Q_p$ is a locally compact abelian group with group operation addition and topology defined via the $p$-adic valuation.  The Laurent series are not formal as they converge in the topology.

Then the $p$-adic integers $\Z_p$, defined as the set of power series
$$\Z_p=\left\{ \sum_{n\geq 0} a_np^n: \, a_n\in\{0,1,\hdots,p-1\}\right\},$$
is a compact open subset of $\Q_p$.  Further characterizations of $\Z_p$ are that $\Z_p$ is the unit ball (with respect to $p$-adic valuation) of $\Q_p$ and the closure of $\Z$ in $\Q_p$.

The $p$-adic numbers are self-dual.  Let $\{\cdot\}: \Q_p \rar \Q_p$ be defined as
\[
\left\{  \sum_{n\geq n_0} a_np^n \right\} =  \sum_{n=n_0}^{\max\{0,n_0\}} a_np^n.
\]
Then each $y \in \Q_p$ defines an element of $\wh{\Q}_p$ as $\langle \cdot, y\rangle = \exp(2\pi i \left\{ \cdot y\right\})$.

If we consider $A:\Q_p\to\Q_p$ to be the morphism given by $Ax=p^{-1}x$, then $A$ is an automorphism of $\Q_p$, and it is easy to see that is expansive with respect to $\Z_p$. 
\end{example}

\begin{example}\cite[Example 2.11]{BEN04} Let $p$ be a prime number, where $\bF_p$ is the field of order $p$.  The additive group of the field $\bF_p((t))$ of formal Laurent series in variable $t$:
$$\bF_p((t))=\left\{ \sum_{n\geq n_0} a_nt^n: \, n_0\in\Z\,\,\mathrm{and}\,\, a_n\in\bF_p \right\},$$
where addition is modular rather than ``carries'':
\[
(p-1)t + 1t = 0t =0
\] 
is an LCA group with respect to the topology defined from an analog of the $p$-adic valuation.  The set of formal power series
$$\bF_p[[t]]=\left\{ \sum_{n\geq 0} a_nt^n: \, a_n\in\bF_p \right\}$$
is a compact open subgroup.  One possible expansive automorphism is multiplying by $t^{-1}$.  However, the structure of $\bF_p((t))$ yields a richer collection of automorphisms than for $\mathbb{Q}_p$.
\end{example}

Both classes of the LCA groups above are also fields.  Further examples may be formed by considering the additive groups of finite field extensions or vector spaces over the above examples.

\begin{example}\cite[Example 2.14]{BEN04} \label{Ex:counterexample} Let $G_1$ be an LCA group with a compact open subgroup $H_1$ and $G_2$ be a nontrivial discrete abelian group. If we consider $G=G_1\times G_2$, then $G$ is an LCA group with a compact open subgroup $H=H_1\times\{0\}$. Further, the annihilator $H^{\perp}$ is $H_1\times\wh{G_2}$ and if $A_1$ is an automorphism of $G_1$ which is expansive  with respect to $H_1$, then $A=A_1\times id_{G_2}$ is an automorphism of $G$ which is expansive with respect to $H$. However, in this case, the union of all positive iterates $A^nH$ can not cover $G$ since $AH=A_1H_1\times\{0\}$. 
\end{example}

Following \cite{SK20}, we shall call an  LCA group which admits an expansive automorphism an {\it expansible} group.  
Expansiveness can also be characterized by the action of the adjoint automorphism as the next lemma shows, whose proof can be found in \cite[Lemma 2.6]{BEN04}.

\begin{lemma} Let $G$ be an LCA with a compact open subgroup $H\subseteq G$, and let $A\in \Aut(G)$.
\begin{itemize}
\item [$(i)$]  $H\subseteq AH \hspace{.2cm}\mbox{if and only if}\hspace{.2cm} H^{\perp}\subseteq A^*H^{\perp}$.
\item[$(ii)$]  $H\subsetneq AH\hspace{.2cm} \mbox{if and only if}\hspace{.2cm} H^{\perp}\subsetneq A^*H^{\perp}$.
\item[$(iii)$]  If $H\subseteq AH$, then
\begin{align} \label{exp}\bigcap_{n\leq0}A^nH=\{0\} \iff \bigcup_{n\geq0}A^{*n}H^{\perp}=\wh{G}.\end{align}
\end{itemize}
\end{lemma}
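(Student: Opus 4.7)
The plan is to derive all three statements from two standard Pontryagin-duality facts. The first is the identity
\[
(AH)^\perp = (A^*)^{-1}(H^\perp),
\]
which follows immediately from the defining relation $\langle Ax,\gamma\rangle = \langle x, A^*\gamma\rangle$: a character $\gamma$ annihilates $AH$ precisely when $A^*\gamma$ annihilates $H$. The second is that taking annihilators is an order-reversing bijection between closed subgroups of $G$ and closed subgroups of $\wh{G}$.

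For (i), I would observe that $H \subseteq AH$ if and only if $(AH)^\perp \subseteq H^\perp$, and then rewrite the left-hand side via the identity above to get $(A^*)^{-1}(H^\perp) \subseteq H^\perp$, which, after applying $A^*$, becomes $H^\perp \subseteq A^* H^\perp$. Part (ii) then follows by combining (i) with the observation that $H = AH$ if and only if $H^\perp = A^* H^\perp$, a direct consequence of the bijectivity of the annihilator correspondence together with the same identity; hence the strict inclusions on each side must correspond.

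For (iii), the starting point will be the standard duality formula
\[
\Bigl(\bigcap_{n\leq 0} A^n H\Bigr)^\perp \;=\; \overline{\bigcup_{n\leq 0}(A^n H)^\perp} \;=\; \overline{\bigcup_{m\geq 0} A^{*m}H^\perp},
\]
where the reindexing uses $(A^n H)^\perp = A^{*-n}H^\perp$ and $m = -n$. Since a closed subgroup is trivial if and only if its annihilator is the whole dual group, this reduces $\bigcap_{n\leq 0} A^n H = \{0\}$ to the condition $\overline{U} = \wh{G}$, where $U := \bigcup_{m\geq 0}A^{*m}H^\perp$. The hard part is removing the closure. Here I would use (i): the hypothesis $H \subseteq AH$ gives the increasing chain $H^\perp \subseteq A^* H^\perp \subseteq A^{*2} H^\perp \subseteq \cdots$, so $U$ is a subgroup of $\wh{G}$; each $A^{*m}H^\perp$ is open (as the image of the compact open subgroup $H^\perp$ under a homeomorphism), so $U$ is an open subgroup; and every open subgroup of a topological group is automatically closed, since its complement is a union of open cosets. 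Thus $\overline{U} = U$, and the equivalence in \eqref{exp} follows.
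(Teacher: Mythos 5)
Your proposal is correct, and since the paper does not reproduce a proof of this lemma (it simply cites \cite[Lemma 2.6]{BEN04}), there is nothing in the text to diverge from: your argument is the standard Pontryagin-duality proof used in that reference, built on the identity $(AH)^{\perp}=(A^*)^{-1}(H^{\perp})$, the order-reversing annihilator bijection for closed subgroups, and, for (iii), the annihilator-of-intersection formula together with the observation that the increasing chain $\{A^{*m}H^{\perp}\}_{m\geq 0}$ (which exists by part (i)) has union an open, hence closed, subgroup, so the closure can be dropped. The one step worth stating explicitly is that the general duality formula gives the closed subgroup \emph{generated} by $\bigcup_{n\leq 0}(A^nH)^{\perp}$, which coincides with the closure of the union precisely because that family is a chain---a fact you do supply, just after invoking the formula rather than before.
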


Given an LCA group $G$  with a compact open subgroup $H$ and expansive automorphism $A: G \to G$, we define for $n \in \Z$ and $x \in G$,
\[
Q_n(x) = x + A^n H.
\]
One may think of this as the ``ball'' with ``center'' $x$ and ``radius'' $|A|^n$, keeping in mind that if $y \in Q_n(x)$ then $Q_n(x) = Q_n(y)$, so the choice of ``center'' is not unique.  In the case that $G = \Q_p$, $H= \Z_p$ , and $A$ is multiplication by $1/p$, the $Q_n(x)$ are precisely the balls in the metric induced by the $p$-adic valuation.  Note that each $Q_n(x)$ is a compact open subset of $G$ and also a coset of $A^n H$ in $G$.
Moreover, by \cite[Theorem 4.5]{HR62}, $\{Q_n(x)\}_{n\in\Z, x\in G}$ is a basis for the topology of $G$.

In this paper, we will  be dealing with the specific scenario of forming  ``balls'' in $G \times \wh{G}$, where $G$ satisfies the hypothesis above.  In that case, we write for $n \in \Z$ and $(x,\gamma) \in G \times \widehat{G}$
\begin{align}\label{def:bola}
Q_n(x,\gamma) & = (x,\gamma) + \left(A^n \otimes (A^\ast)^n\right)  (H \times H^\perp) \nonumber\\ 
&= \left(x + A^n H\right) \times \left(\gamma + (A^\ast)^n H^\perp\right)  = Q_n(x) \times Q_n(\gamma).
\end{align}
Note that in this case, the ``radius" of $Q_n(x,\gamma)$ is $(|A||A^*|)^n=|A|^{2n}$.

When  $A$ is expansive with respect to $H$ and  $A^*$ is expansive with respect to $H^\perp$, we have that $\{Q_n(x,\gamma)\}_{n\in\Z, x\in G,\gamma\in\widehat{G}}$ is a basis for the topology of $G\times\widehat{G}$. This is because for $n\leq m, x\in G$ and $\gamma\in\wh G$, $Q_n(x) \times Q_m(\gamma)\subseteq Q_m(x,\gamma)$.

\subsection{Frames and Riesz bases}

Let  $\{\varphi_i\}_{i\in I}$ be a family of elements  in a separable  Hilbert space $\calH$.  It is  said that $\{\varphi_i\}_{i\in I}$ is  a \emph{frame for $\calH$} if there exist constants $A$,$B>0$ such that
\begin{equation}\label{framecondition} A\|f\|^2\leq \sum_{i\in I} |\langle f,\varphi_i\rangle|^2\leq B \|f\|^2 \,\,\,\,\, \forall f\in\calH.\end{equation}
The constants $A, B$ are called \emph{frame bounds}. The \emph{frame operator} defined as $Sf=\sum_{i\in I}\langle f, \varphi_i\rangle \varphi_i$ for $f\in \calH$  is a bounded, invertible, and positive operator  from  $\calH$ onto itself. This provides the well known frame decomposition 
$$f=S^{-1}Sf=\sum_{i\in I} \langle f,\varphi_i\rangle \phi_i \,\,\,\,\, \forall f\in\calH,$$
where $\phi_i=S^{-1}\varphi_i$. The family $\{\phi_i\}_{i\in I}$ is also a frame for $\calH$, which is called the \emph{canonical dual frame}, and has frame bounds $B^{-1}$, $A^{-1}$. Any other frame $\{\widetilde{\phi_i}\}_{i\in I}$ satisfying 
$f=\sum_{i\in I} \langle f,\varphi_i\rangle \widetilde{\phi_i} \,\,\,\,\, \forall f\in\calH,$ is called a \emph{dual frame}, and it is well known that a frame can have dual frames besides the canonical one (typically infinitely many).  For a general reference on frame theory, see, e.g., \cite{Chr16} and the references therein.

Riesz bases are special cases of frames and can be characterized as those frames which are biorthogonal to their canonical dual frame, i.e., such that $\langle \varphi_i,\phi_j\rangle=\delta_{ij}$.

A family $\{\varphi_i\}_{i\in I}$ which satisfies the right  inequality in \eqref{framecondition} (but possibly not the left) is called a \emph{Bessel sequence} for $\calH$.

\section{Density with respect to expansive automorphisms}\label{sec:density}

In this section, using the balls defined above,  we will consider the concept of density, which will extend that of the well known  Beurling density for Euclidean spaces $\R^d$. 
This concept has been considered before in LCA groups that are compactly generated in \cite{AAC15, GK08}, however, with another  approach. In \cite{SK20} the authors work with  the same  density as here, but they consider a slightly different notion of  expansive automorphism. In fact, their automorphism only satisfies condition $(2)$ of Definition \ref{def:expansive}.

\begin{definition}\label{def:separated}
Let $G$ be an LCA group, $H\subseteq G$ a compact open subgroup and $A\in 
\Aut(G)$ expansive with respect to $H$. For $n\in\Z$, a set (countable or uncountable)  $\Lambda\subseteq G$ is 
said to be {\it$(A,n)$-uniformly separated} if 
$\#\{\Lambda\cap Q_n(x)\}\leq 1$ 
for all $x\in G$. 
We say that $\Lambda$ is simply {\it uniformly separated} if it is $(A,n)$-uniformly separated for some $n\in\Z$.
Additionally, $\Lambda$ is said to be {\it $A$-separated} if it is a finite union of uniformly separated sequences. 
\end{definition}

 Recall that if $G$ is an LCA group and  $H\subseteq  G$ is a subgroup,  a {\it section} of a quotient group $G/H$ is a measurable set of representatives, and it contains exactly one element of each coset. 
As stated above, if $H\subseteq  G$ is a compact open subgroup, since $H^{\perp}$ is also compact, then $G/H$ is a discrete group. Thus, every section $C\subseteq G$ for the quotient $G/H$ must be discrete as well. This is because for $x\in C$, $x=C\cap (x+H)$ and then, since $x+H$ is an open set in $G$, $C$ is discrete with respect to the topology of $G$. 

With this in mind, we can say that $\Lambda$ is $(A,n)$-uniformly separated if and only if 
$\#\{\Lambda\cap Q_n(A^nc)\}\leq 1$ 
for all $c\in C$. This is a direct consequence of the fact that for every $n\in\Z$,  $\{Q_n(A^nc)\}_{c\in C}$ is a partition of $G$, that is, $G=\bigcup_{c\in C}Q_n(A^nc)$  where the union is disjoint,  and that $Q_n(A^nc)=Q_n(x)$ for every $x\in Q_n(A^nc)$.

\begin{definition}\label{def:density}
Let $G$ be an LCA group, $H\subseteq  G$ a compact open subgroup and $A\in 
\Aut(G)$ expansive with respect to $H$. For a sequence  $\Lambda\subseteq G$, the \emph{upper and lower Beurling density of $\Lambda$} are  defined by 
$$D^+_A(\Lambda):=\limsup_{n\to +\infty}\frac1{|A|^n}\max_{x\in G}\#\{\Lambda\cap Q_n(x)\},$$
and 
$$D^-_A(\Lambda):=\liminf_{n\to +\infty}\frac1{|A|^n}\min_{x\in G}\#\{\Lambda\cap Q_n(x)\},$$ respectively. 
If $D^+_A(\Lambda)=D^-_A(\Lambda)$ we say that $\Lambda$ has \emph{uniform density} $D_A(\Lambda)=D^+_A(\Lambda)=D^-_A(\Lambda)$.
\end{definition}

The analogy with the Beurling density defined in $\R^d$ \cite{CDH99, H07,SK20} is clear from the definition noting that, since $m_G$ is invariant under translations and $m_G(H)=1$, we have that $|A|^n=m_G(A^nH)=m_G(Q_n(x))$ for any $x\in G$.

At first glance, the Beurling density seems to depend on the automorphism. However, if both the automorphism and their adjoint are expansive, the density becomes independent of the automorphism choice, as we show next. First, we observe some properties about sections that will be useful.  

\begin{remark}\label{rem:sections}
Given $A\in \Aut(G)$ expansive  with respect to a compact open subgroup $H$ of $G$, the quotient $AH/H$ must be finite. This is because $AH$ is compact and  then, it may be covered by finite (disjoints) cosets $x+H$. Let $C_0\subseteq AH$ be a finite section for $AH/H$. Therefore, $AH=\bigcup_{c_0\in C_0}H+c_0$, and then we have that $m_G(AH)=\#C_0$ which implies $|A|=\#C_0$. 
Moreover, since $AH\subseteq  G$ is also a compact open subgroup, we can take $C_1$ a discrete section for $G/AH$.
An easy computation shows that the set $C:=C_0+C_1$ must be a (discrete) section for $G/H$. From now on, we will consider sections for $G/H$ of this form.
\end{remark}

\begin{lemma} Let $G$ be an LCA group, $H\subseteq  G$ a compact open subgroup and $A$, $B\in \Aut(G)$ be expansive with respect to $H$ such that $A^*$ and $B^*$ are expansive with respect to $H^\perp$ as well. Then, for every sequence $\Lambda\subseteq G$,  
$$D^+_A(\Lambda)=D^+_B(\Lambda)\quad\textrm { and }\qquad D^-_A(\Lambda)=D^-_B(\Lambda).$$
\end{lemma}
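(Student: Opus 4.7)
The strategy is to show $D_A^+(\Lambda) \leq D_B^+(\Lambda)$ and $D_A^-(\Lambda) \geq D_B^-(\Lambda)$; the reverse inequalities then follow by swapping the roles of $A$ and $B$. The essential input is that, because $A^*$ (resp.\ $B^*$) is expansive on $H^\perp$, the lemma above yields $\bigcup_{n \geq 0} A^n H = G$ and, by the symmetric application, $\bigcup_{n \geq 0} B^n H = G$. Hence both $\{A^n H\}_{n \in \Z}$ and $\{B^n H\}_{n \in \Z}$ are increasing chains of compact open subgroups with intersection $\{0\}$ and union $G$, and the two chains are commensurable in the sense made precise below.

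For each $n \in \Z$, define $k(n) \in \Z$ as the largest integer such that $B^{k(n)} H \subseteq A^n H$. Existence of $k(n)$ rests on two compactness observations. First, since $\bigcap_{k \leq 0} B^k H = \{0\}$ and $A^n H$ is an open neighborhood of $0$, the closed decreasing sets $B^k H \setminus A^n H$ inside the compact $B^0 H = H$ have empty intersection, so some $B^k H$ lies inside $A^n H$, making the admissible set nonempty. Second, since $A^n H$ is compact and covered by the increasing open family $\{B^k H\}_{k \geq 0}$, one has $A^n H \subseteq B^{k_1} H$ for some $k_1$, giving an upper bound on $k(n)$. The same reasoning shows that $k(n)$ is nondecreasing in $n$ and tends to $+\infty$ as $n \to +\infty$: for any fixed $m$, the compact set $B^m H$ eventually lies inside $A^n H$, so $k(n) \geq m$ for $n$ large enough.

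Since $B^{k(n)} H$ is an open subgroup of $A^n H$, there is a disjoint coset decomposition
$$A^n H = \bigsqcup_{c \in C_n}\bigl(c + B^{k(n)} H\bigr), \qquad \#C_n = [A^n H : B^{k(n)} H] = |A|^n/|B|^{k(n)}.$$
Translating by $x$ and counting intersections with $\Lambda$ gives
$$\#\bigl\{\Lambda \cap (x + A^n H)\bigr\} = \sum_{c \in C_n}\#\bigl\{\Lambda \cap (x + c + B^{k(n)} H)\bigr\}.$$
Bounding each term above by $\max_y \#\{\Lambda \cap (y + B^{k(n)} H)\}$, dividing by $|A|^n$, and taking $\max_x$ yields
$$\frac{1}{|A|^n}\max_{x \in G}\#\bigl\{\Lambda \cap (x + A^n H)\bigr\} \leq \frac{1}{|B|^{k(n)}}\max_{y \in G}\#\bigl\{\Lambda \cap (y + B^{k(n)} H)\bigr\}.$$
Taking $\limsup_{n \to \infty}$ and using $k(n) \to \infty$, the right-hand side's limsup along the subsequence $\{k(n)\}$ is bounded above by the full $\limsup$ defining $D_B^+(\Lambda)$, so $D_A^+(\Lambda) \leq D_B^+(\Lambda)$. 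For the lower density, replacing $\max$ by $\min$ in the term-by-term estimate, then taking $\min_x$ and $\liminf_{n \to \infty}$, and using that a subsequential $\liminf$ always dominates the full $\liminf$, produces $D_A^-(\Lambda) \geq D_B^-(\Lambda)$.

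No major obstacle is expected. The only point that requires care is the topological verification that $k(n)$ is well-defined and unbounded, which is a straightforward compactness argument leveraging the two-sided expansiveness hypotheses; the rest is bookkeeping. The clean coset decomposition critically uses that $B^{k(n)} H$ is a genuine subgroup of $A^n H$—not merely a subset—which is automatic in this chain-of-subgroups setting and is precisely what makes the expansible framework more tractable than a purely metric one.
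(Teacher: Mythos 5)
Your proof is correct, and its engine is the same as the paper's: tile one family of balls by cosets of the other, compute the index as the ratio $|A|^n/|B|^{k}$ of the moduli, and compare the normalized counts. The one substantive difference is the direction of the nesting. You fix the $A$-ball $A^nH$ and fit a $B$-ball $B^{k(n)}H$ inside it, so that after taking $\limsup_{n\to\infty}$ the left-hand side is the \emph{full} sequence defining $D_A^+(\Lambda)$, while the right-hand side runs only along the indices $\{k(n)\}$ with $k(n)\to\infty$; the estimate ``limsup along a tail of indices $\leq$ full limsup'' then points exactly the way you need. The paper nests in the opposite direction, $B^nH\subseteq A^{k_0(n)}H$, which places the restricted index set on the $A$-side --- the side being bounded --- so its concluding $\limsup$ step directly controls the $A$-density quotients only along $\{k_0(n)\}$ rather than the full sequence; your arrangement sidesteps having to argue that those indices capture the full $\limsup$. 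Your compactness verifications that $k(n)$ is well defined and unbounded (the shrinking chain $B^kH$ against the open neighborhood $A^nH$ inside the compact $H$, and the exhausting family $\{B^kH\}_{k\ge 0}$ covering the compact $A^nH$, the latter via expansiveness of $B^*$) use the two-sided hypotheses in the same way the paper does, just spelled out more explicitly. In short: same idea and same key lemma, but your bookkeeping of which side carries the subsequence is the cleaner of the two.
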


\begin{proof} Because of the expansiveness of $A^*$ we know by (\ref{exp}) that $G=\bigcup_{n\geq0}A^nH$. Let $n\in\N$ be fixed. Then there exists $k_0=k_0(n)\in\N$ such that $B^nH\subseteq A^{k_0}H$ because $B^nH$ is compact and $\{A^kH\}_{k\in\N}$ is an open cover of nested sets.

Let $X=\{x_i\}_{i\in I}\subseteq A^{k_0}H$ be a section of $A^{k_0}H/B^nH$. Then 

$$A^{k_0}H=\bigcup_{i\in I} (B^nH+x_i), $$

and for $x\in G$, 
$$Q_{k_0}^A(x)=A^{k_0}H+x=\bigcup_{i\in I} (B^nH+x_i)+ x=\bigcup_{i\in I} Q_n^B(x_i + x).$$

By taking intersections with $\Lambda$ and taking the maximum cardinality over $G$, we obtain the following inequality:
\begin{align*}
\max_{x\in G}\#\left\{Q_{k_0}^A(x)\cap\Lambda\right\}&=\max_{x\in G} \#\left\{ \bigcup_{i\in I} Q_n^B(x_i + x)\cap\Lambda\right\}\\
&\leq \sum_{i\in I} \max_{x\in G} \#\left\{Q_n^B(x_i + x)\cap\Lambda\right\} \\
&=\sum_{i\in I} \max_{x\in G} \#\left\{Q_n^B(x)\cap\Lambda\right\} = (\#I) \max_{x\in G} \#\left\{Q_n^B(x)\cap\Lambda\right\}. 
\end{align*}

Furthermore, $\#(A^{k_0}H/H)=\#(A^{k_0}H/B^nH) \, \#(B^nH/H)$ and then
$$\#I=\#(A^{k_0}H/B^nH)=\frac{|A|^{k_0}}{|B|^n}.$$

Finally, we obtain 

$$\frac{\max_{x\in G}\#\{Q_{k_0}^A(x)\cap\Lambda\}}{|A|^{k_0}}\leq \frac{|A|^{k_0}}{|B|^n} \frac{\max_{x\in G} \#\{Q_n^B(x)\cap\Lambda\}}{|A|^{k_0}}=\frac{\max_{x\in G} \#\{Q_n^B(x)\cap\Lambda\}}{|B|^n}.$$

If we take $\limsup_{n\to\infty}$  in the last inequality we have that $D^+_A(\Lambda)\leq D^+_B(\Lambda)$. By using the expansiveness of $B^*$ and using the same reasoning, we obtain that $D^+_A(\Lambda)=D^+_B(\Lambda)$. A similar argument proves that $D^-_A(\Lambda)=D^-_B(\Lambda)$.

\end{proof}

As a consequence of the above result and in order to keep the exposition as clear as possible, we choose to omit the subscript $A$ in the density and simply write $D^+(\Lambda)$, $D^-(\Lambda)$ and $D(\Lambda)$.

The next lemma shows a characterization of the sequences $\Lambda$ whose upper density is finite and provides a valid version of  \cite[Lemma 2.3]{CDH99} in this context. Our proof is based on the group structure. 
In \cite[Theorem 3.7]{SK20},  the authors proved the same result based on \cite[Lemma 2.3]{SK20}, which  is not satisfied in our case. (See Example \ref{Ex:counterexample}  and \cite[Lemma 2.3, item (ii)]{SK20}).

\begin{lemma}\label{lem:density}
Let $G$ be an LCA group, $H\subseteq G$ a compact open subgroup and $A\in 
\Aut(G)$ expansive with respect to $H$. If $\Lambda\subset G$, then the following conditions are equivalent:
\begin{enumerate}
\item[$(i)$] $D^+(\Lambda)<\infty$;
\item[$(ii)$] For some $n\in\Z$ there exists $N_n>0$ such that 
$\#\{\Lambda\cap Q_n(A^nc)\}\leq N_n$ for all $c\in C$, where $C$ is a section for $G/H$. 
\end{enumerate}
Additionaly, if $(ii)$ holds for some $n\in\Z$, it holds for every $n\in\Z$.
\end{lemma}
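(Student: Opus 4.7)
The plan is to prove the equivalence by exploiting the nested structure of the balls: for $n\le m$, $A^nH\subseteq A^mH$, so each coset $Q_m(x)$ of $A^mH$ is the disjoint union of exactly $|A|^{m-n}$ cosets of $A^nH$, and by the partition property $\{Q_n(A^nc)\}_{c\in C}$ of $G$ (noted in the paragraph after Definition \ref{def:separated}), each such sub-coset has the form $Q_n(A^nc)$ for some $c\in C$. This cardinality count $\#(A^mH/A^nH)=|A|^{m-n}$ comes from Remark \ref{rem:sections} applied iteratively.

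For $(i)\Rightarrow(ii)$, the finiteness of the $\limsup$ defining $D^+(\Lambda)$ implies that the sequence $|A|^{-n}\max_{x\in G}\#\{\Lambda\cap Q_n(x)\}$ is eventually bounded by some $M$, hence for all sufficiently large $n$ we have $\max_{x\in G}\#\{\Lambda\cap Q_n(x)\}\le M|A|^n$. Taking $N_n$ to be this finite integer gives $(ii)$ for that $n$. For $(ii)\Rightarrow(i)$, suppose $\#\{\Lambda\cap Q_{n_0}(A^{n_0}c)\}\le N_{n_0}$ for all $c\in C$. For any $m\ge n_0$ and any $x\in G$, partition $Q_m(x)$ as a disjoint union of $|A|^{m-n_0}$ cosets of $A^{n_0}H$, each of which coincides with some $Q_{n_0}(A^{n_0}c_i)$. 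Summing the bound $N_{n_0}$ over the pieces yields
\[
\#\{\Lambda\cap Q_m(x)\}\le |A|^{m-n_0}N_{n_0},
\]
uniformly in $x$. Dividing by $|A|^m$ and passing to $\limsup_{m\to\infty}$ gives $D^+(\Lambda)\le N_{n_0}/|A|^{n_0}<\infty$.

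The additional statement that $(ii)$ holds for every $n\in\Z$ follows by splitting into two cases. For $n\ge n_0$ the argument just given provides the uniform bound $N_n:=|A|^{n-n_0}N_{n_0}$. For $n<n_0$, given any $c\in C$, the containment $A^nH\subseteq A^{n_0}H$ forces $Q_n(A^nc)\subseteq Q_{n_0}(A^{n_0}c')$ for the unique $c'\in C$ such that $A^nc\in Q_{n_0}(A^{n_0}c')$, and hence $\#\{\Lambda\cap Q_n(A^nc)\}\le N_{n_0}$, so one may take $N_n=N_{n_0}$.

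I do not expect any serious obstacle in this proof; it is essentially a bookkeeping exercise. The one step that requires care is the identification of each piece in the partition of $Q_m(x)$ by cosets of $A^{n_0}H$ with some $Q_{n_0}(A^{n_0}c_i)$, which must be justified by invoking the partition $\{Q_{n_0}(A^{n_0}c)\}_{c\in C}$ of $G$ rather than by choosing centers arbitrarily. The expansiveness of $A$ (in particular $H\subsetneq AH$) is used only to guarantee that the modulus $|A|$ is a genuine integer greater than $1$, as explained in Remark \ref{rem:sections}, so that the quotient $A^mH/A^{n_0}H$ has the claimed finite cardinality $|A|^{m-n_0}$.
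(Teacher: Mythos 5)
Your proof is correct and follows essentially the same route as the paper's: both bound the count in a large ball $Q_m(x)$ by tiling it with translates of $Q_n$, each meeting $\Lambda$ in at most $N_n$ points (you use the exact index $|A|^{m-n}$ in one step, while the paper reaches the slightly cruder bound $|A|^{m-n+1}N_n$ by induction on $m$). The only organizational difference is that you prove the final ``for every $n$'' claim directly, exhibiting $N_n$ separately for $n\ge n_0$ and $n<n_0$, whereas the paper argues by contraposition (if $(ii)$ fails for some $n$ then $D^+(\Lambda)=+\infty$); both are valid.
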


\begin{proof}
$(i)\Rightarrow (ii)$. It is obvious from the definition of $D^+(\Lambda)$.\\
$(ii)\Rightarrow (i)$. Let $n\in\Z$ and $N_n>0$ be such that 
$\#\{\Lambda\cap Q_n(A^nc)\}\leq N_n$ for all $c\in C$ and label for every $c\in C$,
$\Lambda\cap Q_n(A^nc)=\{\lambda_{1,c}, \cdots, \lambda_{r,c}\}$ with $r=r(c)\leq 
N_n$. 
Now, for $j\in\{1,\cdots, N_n\}$ and $c_0\in C_0$ set 
$\Lambda_{j, c_0}:=\{ \lambda_{j,c_0+c_1}:\,c_1\in C_1\}$. By construction, each 
element of $\Lambda_{j, c_0}$ lies in a different coset of $G/A^nH$. Then, since there 
are at most $|A|N_n$ many $\Lambda_{j, c_0}$ sets, we have that any coset of  $G/A^nH$ contains at most $|A|N_n$ elements of $\Lambda$.

Now, since 
$$ A^{n+1}H=A^n(AH)=\bigcup_{c_0\in C_0}\left( A^nH+A^nc_0 \right)$$
and 
$$G=A^nG=\bigcup_{c_0\in C_0, c_1\in C_1}\left(A^nH+A^nc_0+A^nc_1\right)=
\bigcup_{c_1\in C_1}\left(\bigcup_{ c_0\in C_0}\left(A^nH+A^nc_0\right)+A^nc_1\right),$$
each coset of $G/(A^{n+1}H)$ has at most $|A|^2N_n$ elements of $\Lambda$. 
Continuing by induction, we see that if $m>n$, then each coset of $G/(A^{m}H)$ has at most $|A|^{m-n+1}N_n$ elements of $\Lambda$. 

As a consequence, 
\begin{align*}
D^+(\Lambda) &= \limsup_{m\to +\infty}\frac1{|A|^m}\max_{x\in G}\#\{\Lambda\cap Q_m(x)\}\\
&\leq \limsup_{m\to +\infty}\frac1{|A|^m} |A|^{m-n+1}N_n\\
&= \frac{N_n}{|A|^{n-1}}<\infty.
\end{align*}

Suppose now that $(ii)$ fails for some $n\in\Z$. Then, $\max_{x\in G}\#\{\Lambda\cap Q_m(x)\}=+\infty$ for every $m\geq n$. As a consequence, $D^+(\Lambda)=+\infty$. 
This completes the proof. 
\end{proof}

\begin{remark}\label{rem:separation}\noindent
Note that if $\Lambda\subset G $ and $D^+(\Lambda)<\infty$, we can deduce from the proof of Lemma \ref{lem:density} that for every fixed $n\in\Z$, 
$\Lambda =\bigcup_{j\in\{1,\cdots, N_n\}, c_0\in C_0}\Lambda_{j,c_o}$ where the union is disjoint. Moreover, every  set $\Lambda_{j,c_o}$ is a uniformly separated sequence because we saw that each element of  $\Lambda_{j,c_o}$ lies in a different coset of $G/A^nH$ and then $\#\{\Lambda_{j,c_o}\cap\left[A^n H+A^n c\right]\}\leq 1$ for every $c\in C$.
Therefore, when $D^+(\Lambda)<\infty$ we have that $\Lambda$ must be a finite union 
of uniformly separated sequences;  that is, $\Lambda$ must be $A$-separated.
Since uniformly separated sequences must be countable, $\Lambda$ must be countable as well. 

\end{remark}
 
\begin{example}\label{ex:densidadC}
Let $G$ be an LCA group, $H\subseteq G$ a compact open subgroup  and $C$  section of the quotient $G/H$. Consider $\Lambda:=C$. Then,  we have that  for all $c\in C$, $\#\{C\cap Q_0(c)\}=1$ and proceeding as in the proof of Lemma \ref{lem:density} we get  
$\#\{C\cap Q_n(A^nc)\}=|A|^n$ for all $n\in\Z$. Therefore, $D^+(C)=D^-(C)=1=D(C)$. 
\end{example}

%%%%%%%%%%%%%%%%%%%%%%%%%%
\section{Modulation spaces on locally compact  groups}\label{sec:modulation-spaces}

In this section, we consider $G$  a locally compact group (non necessarily abelian), with a compact open subgroup $H$. Using definitions and lemmas from the seminal papers \cite{FG1, FG2, fe81-2, fe83-4,fe03}, we will prove that the generalized wavelet transform continuously maps modulation spaces into Wiener spaces. 

Let $G$ be a locally compact  group with right Haar measure $m_G$. To emphasize that $G$ needs not be abelian, in this  section we will use multiplicative notation for the operation. Given  $\calH$  a Hilbert space, a unitary representation of $G$ on $\calH$ is a continuous homomorphism    $\pi:G\to\calU(\calH)$. For $f, g\in\calH$ we consider $\V_gf:G\to\C$, the \emph{generalized wavelet transform} (also called \emph{voice transform} or \emph{representation coefficients}) of $f$ with respect to the window $g$ defined by
$$\V_gf(x):=\langle f,\pi(x)g\rangle,\quad x\in G.$$

The set of \emph{analyzing vectors} on $G$ is given by
\begin{equation}\label{eq:DefA}
\calA:=\{ g\in \calH: \V_gg\in L^1(G)\}.
\end{equation}
Note that for $g\in\calH$ and $x\in G$, since $\pi$ is a representation, $ \V_{\pi(x)g}(\pi(x)g)(y)=\V_gg(x^{-1}yx)$ for all $y\in G$. Thus,  $\calA$ is invariant under $\pi$.  Then, when $\pi$ is an irreducible unitary representation, that is,  without  proper invariant subspaces, $\calA$ must be a dense linear subspace of $\calH$.

In the remainder of this section, we assume that $H\subseteq G$ is a compact open subgroup. Examples of non-abelian groups with that property follows.
\begin{example}
Fix $p$ prime and $n \geq 2$.  Then the general linear group $\operatorname{GL}_n(\Q_p)$ is a non-abelian locally compact group with well-understood representation theory, and  $\operatorname{GL}_n(\Z_p)$ is a compact open subgroup \cite{BuHe06}. 
\end{example}
We denote the space of continuous functions on $G$ as $\calC:=\calC(G)$. For $\varphi\in \calC$ and every $x\in G$ we see that $\|\chi_{Hx}\cdot \varphi\|_{\infty}=\sup_{y\in Hx}|\varphi(y)|<\infty$, where $\chi_A$ is the characteristic function of $A$ that takes the value $1$ for $x \in A$ and $0$ otherwise.

For $1\leq p<\infty$, the Wiener space  $W(\calC, L^p)$ is defined as 
$$W(\calC, L^p):=\{ \varphi\in\calC : \int_{G} \|\chi_{Hx}\cdot \varphi\|_{\infty}^p dm_G(x) <\infty\}.$$
It turns out that $W(\calC, L^p)$ equipped with the norm $\|\varphi\|_{W(\calC, L^p)}:=\left( \int_{G} \|\chi_{Hx}\cdot \varphi\|_{\infty}^p dm_G(x)\right) ^{1/p}$ is a Banach space.
 
Additionally, if $C$ is  a section of $G/H$ and $1\leq p<\infty$, let us denote by $W(\calC, \ell^p)$ the space 
$$W(\calC,\ell^p):=\{\varphi\in\calC : \sum_{x\in C}\|\chi_{Hx}\cdot \varphi\|_{\infty}^p<\infty\}.$$
We note that $W(\calC, \ell^p)$ does not depend on the choice of the section $C$.  Furthermore, it holds that $W(\calC,\ell^p)$ endowed with  the norm given by $\|\varphi\|_{W(\calC, \ell^p)}:=\left(  \sum_{x\in C}\|\chi_{Hx}\cdot \varphi\|_{\infty}^p\right) ^{1/p}$ is a Banach space.

Moreover, it can be seen that actually  $W(\calC, \ell^p) =  W(\calC, L^p)$ and the norms $\|\cdot\|_{W(\calC, L^p)}$ and $\|\cdot\|_{W(\calC, \ell^p) }$ coincide as we show in the next lemma. 

\begin{lemma} Let $G$ be a locally compact group  with a compact open subgroup. Then,  $W(\calC, \ell^p)=  W(\calC, L^p)$. Moreover, for $\varphi\in \calC$,
$$ \|\varphi\|_{W(\calC, \ell^p)}=\|\varphi\|_{W(\calC, L^p)}.$$

\end{lemma}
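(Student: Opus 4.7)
The plan hinges on the observation that the integrand $x \mapsto \|\chi_{Hx}\cdot\varphi\|_\infty$ is constant on each right coset of $H$. Indeed, if $x \in Hc$ for some $c \in C$, then $Hx = H(hc) = Hc$ (writing $x = hc$ with $h \in H$), so $\chi_{Hx} = \chi_{Hc}$ and hence $\|\chi_{Hx}\cdot\varphi\|_\infty = \|\chi_{Hc}\cdot\varphi\|_\infty$. Because $H$ is open in $G$ (and therefore closed), the right cosets $\{Hc\}_{c\in C}$ form a pairwise disjoint family of measurable sets whose union is $G$, and the quotient $G/H$ is discrete.

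With this in hand, I would decompose the integral defining $\|\varphi\|_{W(\calC,L^p)}$ along the cosets and exploit that the integrand is constant on each one:
\begin{align*}
\|\varphi\|_{W(\calC, L^p)}^p
&= \int_G \|\chi_{Hx}\cdot\varphi\|_\infty^p\, dm_G(x)
= \sum_{c\in C}\int_{Hc}\|\chi_{Hx}\cdot\varphi\|_\infty^p\, dm_G(x)\\
&= \sum_{c\in C}\|\chi_{Hc}\cdot\varphi\|_\infty^p\, m_G(Hc).
\end{align*}
The interchange is legitimate by Tonelli's theorem since the integrand is non-negative. By right-invariance of $m_G$ we have $m_G(Hc) = m_G(H)$ for every $c\in C$, and under the standing normalization $m_G(H)=1$ (inherited from the preliminaries) the sum collapses to
$$\|\varphi\|_{W(\calC, L^p)}^p = \sum_{c\in C}\|\chi_{Hc}\cdot\varphi\|_\infty^p = \|\varphi\|_{W(\calC, \ell^p)}^p.$$
In particular the two quantities are finite or infinite together, which yields the set-theoretic equality $W(\calC, L^p) = W(\calC, \ell^p)$ together with the equality of norms.

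I do not anticipate any serious obstacle: the argument is essentially a routine unfolding of the two definitions, the three ingredients being the right-invariance of $m_G$, the openness of $H$ (which supplies the disjoint coset partition and, via the normalization $m_G(H)=1$, removes any measure factor), and the coset-invariance of $\|\chi_{Hx}\cdot\varphi\|_\infty$. The only minor point worth flagging is that the value $\sum_{c\in C}\|\chi_{Hc}\cdot\varphi\|_\infty^p$ does not depend on the particular section chosen, but this is already noted immediately after the definition of $W(\calC,\ell^p)$ and is transparent from the above identity, since the left-hand side $\|\varphi\|_{W(\calC,L^p)}$ makes no reference to $C$.
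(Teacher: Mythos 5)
Your proposal is correct and follows essentially the same route as the paper's proof: decompose the integral over the disjoint right cosets $Hc$, observe that $\|\chi_{Hx}\cdot\varphi\|_{\infty}$ is constant equal to $\|\chi_{Hc}\cdot\varphi\|_{\infty}$ on each coset, and use $m_G(Hc)=m_G(H)=1$ to collapse each integral to a single term of the sum. The only difference is that you make the justifications (Tonelli, right-invariance, independence of the section) explicit where the paper leaves them implicit.
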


\begin{proof} Let $\varphi\in \calC$. Since  $G=\bigcup_{c\in C} Hc$ where the union is disjoint, $C$ is a section of $G/H$ and $m_G(H)=1$, we can write
\begin{align*}
\|\varphi\|_{W(\calC, L^p)}^p&=\int_G \|\chi_{Hx}\cdot\varphi\|_{\infty}^p dm_G(x)=\sum_{c\in C} \int_{Hc}\|\chi_{Hx}\cdot\varphi\|_{\infty}^p dm_G(x) \\
&=\sum_{c\in C} \int_{Hc}\|\chi_{Hc}\cdot\varphi\|_{\infty}^p dm_G(x)
=\sum_{c\in C} \|\chi_{Hc}\cdot\varphi\|_{\infty}^p
\\&=\|\varphi\|_{W(\calC, \ell^p)}^p.
\end{align*}
Then $\varphi\in W(\calC, \ell^p)$ if and only if $\varphi\in W(\calC, L^p)$.
\end{proof}

We now  consider the following  subset of $\calA$,
\begin{equation}\label{eq:DefB}
\B=\{g\in \calH: \V_gg\in W(\calC,L^1)\},
\end{equation}
which turns out to be    also invariant under $\pi$. As before, when $\pi$ is irreducible, $\B$ is  dense in $\calH$. Moreover, when additionally $G$ is abelian,  as a consequence of  \cite[Lemma 7.2]{FG2},  $\calA=\B$.

 Fixing an arbitrary non-zero element $g\in\calA$, the space $\M^1(G)$ is given by
$$\M^1(G):=\{ f\in\calH : \V_gf\in L^1(G)\},$$
and it is called a {\it modulation space}. 
It is a Banach space with the norm $\|f\|_{\M^1}:=\|\V_gf\|_{L^1}$. The set $\M^1$ is independent of the choice of $g$; i.e., different vectors in $\calA$ give the same space with equivalent norms (see, for instance, \cite[Theorem 4.2]{FG1}).   Considering the topological dual of $\M^1$, denoted as $(\M^1(G))'$,  and $p\in [1,\infty]$, it is said  that $f\in \M^p(G)$, the \emph{modulation space of order $p$}, if $f\in(\M^1(G))'$ and $\|\V_gf\|_{L^p(G)}<+\infty$. These spaces are  Banach spaces with the norm $\|f\|_{\M^p}=\|\V_gf\|_{L^p}$, and they are also  independent of the choice of the window $g$. For more details on these spaces we refer to \cite[Section 4]{FG1}.

It is known \cite[Theorem 8.1]{FG2} that for general locally compact groups, when $g\in\B$, $\V_g$ maps   the space $\M^p$ into $W(\calC,\ell^p)$. We will prove now that, when $G$  has a compact open subgroup, $\V_g$ maps   $\M^p$ into $W(\calC,\ell^p)$ continuously.

\begin{proposition} \label{prop} Let $G$ be a locally compact group with  a compact open subgroup $H$, $\pi:G\to\mathcal{U}(\calH)$ an irreducible unitary  representation of $G$,  $g\in \B$, and $1\leq p\leq +\infty$. If $f\in\M^p(G)$, then $\V_gf\in W(\calC,\ell^p)$. Furthermore, there exists a constant $K>0$ such that
\begin{equation}\label{desig}\|\V_gf\|_{W(\calC,\ell^p)}\leq K\|f\|_{\M^p} \end{equation}
for every $f\in\M^p(G)$.
\end{proposition}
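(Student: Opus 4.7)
The plan is to combine a reproducing convolution identity for the voice transform with a Young-type inequality on Wiener amalgam spaces. Since $g\in\B\subseteq\calA$ and $\pi$ is irreducible, Feichtinger--Gr\"ochenig coorbit theory supplies a reproducing formula of the form
\[
\V_g f \;=\; c_g\,(\V_g f)\ast(\V_g g),
\]
valid for every $f$ in the reservoir $(\M^1)'$, hence for every $f\in\M^p$. Taking absolute values yields the pointwise bound
\[
|\V_g f(x)|\le c_g\bigl(|\V_g f|\ast|\V_g g|\bigr)(x),\qquad x\in G.
\]
By definition of $\M^p$ we have $\V_g f\in L^p(G)$ with $\|\V_g f\|_{L^p}=\|f\|_{\M^p}$, and since $g\in\B$ we have $\V_g g\in W(\calC,L^1)$. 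Thus \eqref{desig} reduces to the convolution estimate
\[
\|F\ast K\|_{W(\calC,\ell^p)}\le C\,\|F\|_{L^p}\|K\|_{W(\calC,L^1)},\qquad F\in L^p(G),\ K\in W(\calC,L^1).
\]

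To establish this Young-type inequality, fix a section $C$ of $G/H$. For $c\in C$ and $z\in Hc$, split
\[
(F\ast K)(z)=\sum_{c'\in C}\int_{Hc'}F(y)K(y^{-1}z)\,dm_G(y).
\]
Because $H$ is compact and open, as $y$ ranges over $Hc'$ and $z$ over $Hc$, the point $y^{-1}z$ stays within a fixed finite union of $H$-cosets of $G$; hence $|K(y^{-1}z)|$ is dominated uniformly by a local sup $\|\chi_{Hc''}\cdot K\|_\infty$ for an explicit $c''=c''(c,c')\in C$. Taking $\sup_{z\in Hc}$, applying H\"older in the integral on $Hc'$, and summing in $c\in C$ via the discrete Young inequality $\ell^p\ast\ell^1\hookrightarrow\ell^p$ on the quotient $G/H$ yields the claim. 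The identity $W(\calC,L^1)=W(\calC,\ell^1)$ established just above is essential here, as it lets us interpret $\|K\|_{W(\calC,L^1)}$ as the $\ell^1$-sum of local sups that appears in the discrete Young bound.

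Combining the two steps gives
\[
\|\V_g f\|_{W(\calC,\ell^p)}\le c_g\,C\,\|\V_g g\|_{W(\calC,L^1)}\,\|f\|_{\M^p},
\]
so the proposition holds with $K:=c_g\,C\,\|\V_g g\|_{W(\calC,L^1)}$. The principal technical obstacle is the Young inequality in the Wiener-amalgam setting for a locally compact group that need not be abelian: one must verify that right translation by coset representatives respects the local sups defining the amalgam norm. The compact-open nature of $H$ is precisely what lets these local sups descend to a discrete convolution on $G/H$, and it is the source of the quantitative improvement over the qualitative statement in \cite[Theorem 8.1]{FG2}. An alternative, cleaner route is to invoke the closed graph theorem directly: the map $\V_g\colon\M^p\to W(\calC,\ell^p)$ is well-defined by \cite[Theorem 8.1]{FG2}, both spaces are Banach, and convergence in either space forces $L^p$-convergence of $\V_g f$ (since $\|\cdot\|_{L^p}\le\|\cdot\|_{W(\calC,\ell^p)}$), so the graph is closed; this bypasses the explicit convolution argument at the cost of not exhibiting the constant $K$.
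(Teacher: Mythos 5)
Your primary argument (reproducing formula plus an amalgam Young inequality) is the standard coorbit-theoretic route, but the justification you sketch for the Young-type estimate breaks down exactly at the point you yourself flag as the principal obstacle, and it breaks down for the groups this proposition is meant to cover. For $y\in Hc'$ and $z\in Hc$ the product $y^{-1}z$ ranges over the set $c'^{-1}Hc$, and this set is contained in a single coset $Hc''$ only when $H$ is normal in $G$; in general it meets exactly $[H:H\cap c'H(c')^{-1}]$ right cosets of $H$, a number which is finite for each $c'$ but need not be bounded uniformly in $c'$. The paper's own non-abelian example, $G=\operatorname{GL}_n(\Q_p)$ with $H=\operatorname{GL}_n(\Z_p)$, already exhibits this: conjugating $H$ by $\operatorname{diag}(p^{-m},1,\dots,1)$ yields subgroups whose intersection with $H$ has arbitrarily large index as $m$ grows. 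So the ``explicit $c''=c''(c,c')$'' does not exist, the covering multiplicity is unbounded, and the proposed reduction to ``the discrete Young inequality $\ell^p\ast\ell^1\hookrightarrow\ell^p$ on $G/H$'' does not even parse, since $G/H$ is not a group when $H$ is not normal. The convolution relation $L^p\ast W(\calC,L^1)\subseteq W(\calC,L^p)$ that you need is in fact true on general locally compact groups, but its proof (see the convolution relations in \cite{FG1}) requires genuine two-sided bookkeeping of left and right translations (and, in the non-unimodular case, the involution $K^{\nabla}(x)=\overline{K(x^{-1})}$ applied to $\V_gg$); it cannot be obtained by treating $H$ as if it were normal. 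The reproducing formula itself, with the correct normalization and extended to the reservoir, is standard and not where the problem lies.

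Your fallback via the closed graph theorem, by contrast, is correct and complete: the map $\V_g\colon\M^p\to W(\calC,\ell^p)$ is well defined by \cite[Theorem 8.1]{FG2} together with the identification $W(\calC,L^p)=W(\calC,\ell^p)$, both spaces are Banach, and since $m_G(Hc)=1$ one has $\|\cdot\|_{L^p}\leq\|\cdot\|_{W(\calC,\ell^p)}$, so a sequence converging in both the domain and the target has its two candidate limits agreeing in $L^p$, hence (by continuity of the functions involved) everywhere. This is close in spirit to what the paper actually does: the paper applies the uniform boundedness principle to the family of sampling operators $R_X\colon f\mapsto(\V_gf(x_i))_{i}$ over all sections $X$ of $G/H$, each continuous by \cite[Theorem 8.1]{FG2}, and then observes that $\sup_X\|R_Xf\|_{\ell^p}=\|\V_gf\|_{W(\calC,\ell^p)}$ because the supremum of the continuous function $|\V_gf|$ over each compact coset is attained at a point of some section. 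Both soft arguments yield boundedness without an explicit constant; your convolution route, once the amalgam Young inequality is properly established, would have the advantage of producing $K$ explicitly in terms of $\|\V_gg\|_{W(\calC,L^1)}$.
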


\begin{proof} By  \cite[Theorem 8.1]{FG2} we know that for $f\in \M^p(G)$ we have $\V_gf\in W(\calC,L^p)$, and that  for each set $X=\{x_i\}_{i\in I}$ which is a section of $G/H$, the linear operator given by
$$R_X: f\mapsto (\V_gf(x_i))_{i\in I}$$
maps $\M^p(G)$ to $\ell^p(I)$ continuously; i.e., there exists $K_X>0$ such that
$$\|(\V_gf(x_i))_{i\in I}\|_{\ell^p}\leq K_X \|f\|_{\M^p} \hspace{.5cm} \forall f\in \M^p(G).$$

Consider $\F=\{R_X : \mbox{X is a section of $G/H$}\}$ and fix $f\in \M^p(G)$. Then, since $\V_gf$ is continuous, we have that 
$$\sup_{R_X\in\F}\|R_X(f)\|_{\ell^p}=\sup_{X=\{x_i\}_{i\in I}} \|(\V_gf(x_i))_{i\in I}\|_{\ell^p}=\|(\V_gf(\tilde{x_i}))_{i\in I}\|_{\ell^p}$$
where $X$ is a section of $G/H$ and, for each $i\in I$, $\tilde{x_i}\in H+x_i$ is a point that maximizes $|\V_gf|$ on $H+x_i$. Then, by the uniform boundedness principle there exists $K>0$ such that
$$\sup_{R_X\in\F} \|R_X\|=\sup_{R_X\in\F} K_X\leq K.$$
Consequently, we have for all sections $\{x_i\}_{i\in I}$ of $G/H$ that the following inequality holds
$$\|(\V_gf(x_i))_{i\in I}\|_{\ell^p}\leq K \|f\|_{\M^p},$$
for every $f\in  \M^p(G)$. Now, note that, for each $f \in \M^p(G)$, $\|\V_gf\|_{W(\calC,\ell^p)}=\|(\V_gf(\tilde{x_i}))_{i\in I}\|_{\ell^p}$ for a proper section $\tilde{X}=\{\tilde{x_i}\}_{i\in I}$.
Thus, 
$$\|\V_gf\|_{W(\calC,\ell^p)}\leq K \|f\|_{\M^p} \hspace{.5cm} \forall f\in \M^p(G).$$
\end{proof}

%%%%%%%%%%%%%%%%%%%%%%%%%%%%%%%%%%%%%%%%%%%%
We shall  see now that there is a valid version of Proposition \ref{prop} for projective representations. 

For this, let $G$ be a locally compact group and recall that a {\it projective representation} is a continuous mapping  $\Pi:G\to\calU(\calH)$ for which there exists a continuous function $\alpha:G\times G\to \mathbb T$, called a {\it 2-cocycle}, such that $\Pi(x)\Pi(y)=\alpha(x,y)\Pi(xy)$. It is usual to call $\Pi$ an {\it $\alpha$-projective representation} to emphasize the dependence of $\Pi$ on $\alpha$. 
As we did for unitary representations, we define the {\it generalized wavelet transform}  corresponding to a projective representation as
$$\V^\Pi_gf(x):=\langle f, \Pi(x)g\rangle,$$
for $f, g\in\calH$.

Every projective representation of $G$  induces a unitary representation on the \emph{Mackey group} associated to $G$. The last is defined as follows: if $G$ is a locally compact group and $\alpha$ is a 2-cocycle, as a topological space, the Mackey group is $G\times\mathbb{T}$ with the product is given by
$$(x_1,\tau_1)(x_2,\tau_2)=(x_1x_2,\tau_1\tau_2\alpha(x_1,x_2)),$$
for $x_1,x_2\in G$ and $\tau_1,\tau_2\in\mathbb T$. The Mackey group associated to $G$ is a locally compact group and its Haar measure is given by the product of the Haar measures on $G$ and $\mathbb T$.
Then, for a $\alpha$-projective representation $\Pi:G\to\calU(\calH)$, define $\pi:G\times \mathbb T\to\calU(\calH)$ as 
\begin{equation}\label{def:unitaryMackey}
\pi(x,\tau):=\tau\Pi(x),
\end{equation}
for $\tau\in\mathbb T, x\in G$. This mapping $\pi$ turns out to be a unitary representation, and it is irreducible when $\Pi$ is.  
Note that for every $f,g\in\calH$, $x\in G$ and $\tau\in\mathbb T$ we have
$$\V^\Pi_gf(x)=\tau\langle f, \tau \Pi(x)g\rangle=\tau\langle f, \pi(x,\tau)g\rangle=\tau\V_gf(x,\tau),$$
with $\V_gf$ being the generalized wavelet transform associated to $\pi$. 
As a consequence, the sets $\calA$ and $\B$ given in \eqref{eq:DefA} and \eqref{eq:DefB} respectively, remain equal if we use the generalized wavelet transform induced by $\Pi$ instead of the one induced by the unitary representation given by \eqref{def:unitaryMackey}.
Then, the same holds for modulation spaces. 

Therefore, we obtain the corresponding version of Proposition \ref{prop} for projective representations. This result is a particular case of \cite[Theorem 2.5]{GS07}, with a significant distinction being our demonstration that the inclusion given by the wavelet transform is continuous.

\begin{theorem}\label{projective-inequality} Let $G$ be a locally compact  group with  a compact open subgroup, $\Pi:G\to\mathcal{U}(\calH)$ a irreducible  projective representation of $G$,  $g\in \B$ and $1\leq p\leq +\infty$. If $f\in\M^p(G)$  then $\V^\Pi_gf\in W(\calC,\ell^p)$. Furthermore, there exists a constant $K>0$ such that
\begin{equation}\label{desig}\|\V^\Pi_gf\|_{W(\calC,\ell^p)}\leq K\|f\|_{\M^p} \end{equation}
for every $f\in\M^p(G)$.
\end{theorem}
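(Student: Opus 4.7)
The plan is to reduce the projective case to the unitary case already handled by Proposition \ref{prop}, using the Mackey-group construction described just before the statement. First I would verify that the ambient setup transfers cleanly to $\widetilde G := G\times\mathbb T$: it is a locally compact group with right Haar measure equal to the product of $m_G$ and the normalized Haar measure on $\mathbb T$, and since $\mathbb T$ is compact, $\widetilde H := H\times\mathbb T$ is a compact open subgroup of $\widetilde G$. Continuity of the group operation and of the representation $\pi(x,\tau)=\tau\Pi(x)$ follow from the continuity of the $2$-cocycle $\alpha$, and $\pi$ inherits irreducibility from $\Pi$, as noted in the excerpt. Moreover, as remarked there, the sets $\calA$ and $\B$, and hence the modulation spaces $\M^p$, are unchanged when one replaces $\V_g$ by $\V^\Pi_g$; in particular the hypothesis $g\in\B$ transfers to $\pi$ automatically, and the norm $\|f\|_{\M^p}$ is the same in both pictures.

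Next, I would compare the two Wiener norms. The identity
\[
\V_g f(x,\tau) = \langle f,\tau\Pi(x)g\rangle = \bar\tau\, \V^\Pi_g f(x)
\]
yields $|\V_g f(x,\tau)| = |\V^\Pi_g f(x)|$ for every $(x,\tau)\in\widetilde G$. Taking a section of $\widetilde G/\widetilde H$ of the form $\widetilde C = C\times\{1\}$, where $C$ is a section of $G/H$, a direct computation gives
\[
\|\chi_{\widetilde H(c,1)}\cdot \V_g f\|_\infty \;=\; \sup_{y\in Hc,\,\sigma\in\mathbb T}|\V_g f(y,\sigma)| \;=\; \|\chi_{Hc}\cdot \V^\Pi_g f\|_\infty,
\]
and summing $p$-th powers over $C$ (or taking the supremum if $p=\infty$) produces the identity $\|\V^\Pi_g f\|_{W(\calC,\ell^p)(G)} = \|\V_g f\|_{W(\calC,\ell^p)(\widetilde G)}$.

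With these identifications in place, the bound follows at once by applying Proposition \ref{prop} to the irreducible unitary representation $\pi$ on $\widetilde G$ with the same window $g\in\B$: there exists $K>0$ such that $\|\V_g f\|_{W(\calC,\ell^p)(\widetilde G)} \leq K\|f\|_{\M^p}$ for every $f\in\M^p$, and combining with the previous equality yields \eqref{desig}.

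The only potentially delicate point I anticipate is the bookkeeping of measure normalizations and sections — in particular, checking that assigning $\mathbb T$ its Haar measure of total mass $1$ makes $m_{\widetilde G}(\widetilde H)=1$ consistent with the convention used to define the Wiener spaces, and that a section of $\widetilde G/\widetilde H$ can indeed be taken to collapse the $\mathbb T$-direction onto a section of $G/H$. Once these normalizations are pinned down, the transfer from $\pi$ to $\Pi$ is essentially formal, and no new analytic ingredient is required beyond Proposition \ref{prop}.
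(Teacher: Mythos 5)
Your proposal is correct and follows essentially the same route as the paper: pass to the Mackey group $G\times\mathbb T$, observe that $H\times\mathbb T$ is a compact open subgroup and that $C\times\{1\}$ is a section, identify the two Wiener norms via $|\V_gf(x,\tau)|=|\V^\Pi_gf(x)|$, and apply Proposition \ref{prop} to the induced unitary representation. The normalization point you flag is handled automatically by taking the Haar measure on $\mathbb T$ with total mass $1$, as you suggest.
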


\begin{proof} 
Note that if $H\subseteq G$ is a compact open subgroup of $G$, then $H\times \mathbb T$ is a compact open subgroup of the Mackey group of $G$.  Also note that if $C$ is a section of $G/H$, then $C\times \{1\}$ is a section of $(G\times \mathbb T)/(H\times \mathbb T)$. Therefore, for each $c\in  C$ we have
$$\|\chi_{Hc}\cdot \V^\Pi_gf\|_\infty=\|\chi_{Hc\times \mathbb T}\cdot \V_gf\|_\infty,$$
and then
$$\|\V^\Pi_gf\|_{W(\calC,\ell^p)}=\|\V_gf\|_{W(\calC(G\times \mathbb T),\ell^p)}.$$
The result follows by applying  Proposition \ref{prop}.
\end{proof}

\section{Reproducing properties of Gabor systems}\label{sec:Gabor}

In this section we study Bessel and frame conditions on Gabor systems in terms of density.
To be precise, let us  fix $G$, an LCA group. For every $x\in G$,  the {\it translation operator} by $x$ of a function $f\in L^2(G)$ is given by 
$$T_xf(y)=f(y-x),\,\,\textrm{ for } m_G\textrm{-a.e. } y\in G.$$  

For $\xi\in\wh G$, the {\it modulation operator} by $\xi$ of a function $f\in L^2(G)$ is defined by 
$$M_\xi f(y)=\langle y,\xi\rangle f(y), \,\,\textrm{ for } m_G\textrm{-a.e. } y\in G..$$ 

Now, given a function $\varphi\in L^2(G)$ and a set $\Lambda\subseteq  G\times\widehat G$, we define the {\it  Gabor system generated by $\varphi$ and $\Lambda$} as 
$$S(\varphi,\Lambda)=\{M_\xi T_x\varphi\}_{(x,\xi)\in \Lambda}.$$

In order to establish frame conditions on $S(\varphi,\Lambda)$ in terms of density of $\Lambda$, we assume that $G$ has a compact open subgroup $H$, and we fix $A\in \Aut(G)$ expansive with respect to $H$ such that $A^*$ is expansive with respect to $H^\perp$.  Then, the densities of $\Lambda\subseteq G\times \wh G$ are 
$$D^+(\Lambda):=\limsup_{n\to +\infty}\frac1{|A|^{2n}}\max_{(x, \xi)\in G\times \wh G}\#\{\Lambda\cap Q_n(x, \xi)\},$$ and 
$$D^-(\Lambda):=\liminf_{n\to +\infty}\frac1{|A|^{2n}}\min_{(x, \xi)\in G\times \wh G}\#\{\Lambda\cap Q_n(x, \xi)\},$$
where $Q_n(x,\xi)$ is defined as in \eqref{def:bola}.

On the other hand,  note that translation and modulation operators are unitary in $L^2(G)$, and they 
satisfy the intertwining relationship $M_\xi T_x f=\langle x,\xi\rangle T_x M_\xi f$ for all $x\in G$, $\xi\in\widehat G$ and for all $f\in L^2(G)$. Thus, the Gabor representation $\Pi:G\times\wh{G}\to\mathcal{U}(L^2(G))$ given by 
\begin{equation}\label{eq:Gabor-representation}
\Pi(x,\xi):=M_{\xi}T_x
\end{equation} is an irreducible projective representation with 2-cocycle given by $\alpha((x_1, \xi_1), (x_2, \xi_2))= \langle x_1,\xi_2\rangle$. Then, we can make use of the tools described in the previous section. In particular, we have defined the well-know {\it short-time Fourier transform}
$$V_g f(x,\xi):=\langle f, M_\xi T_x g \rangle=\V^\Pi_gf(x,\xi),$$
where, $f,g\in L^2(G)$, and $(x,\xi)\in G\times\wh{G}$. For  fixed $f,g\in L^2(G)$, $V_gf$ is well-defined and continuous on $G\times\widehat G$. 

When we consider translations along a section of $G/H$ and modulations along a section of $\wh{G}/H^{\perp}$ of the function $\chi_H$, it turns out that the obtained Gabor system  is an ortonormal basis for $L^2(G)$. See  \cite[Theorem 2.7, Case II]{GS07} for a proof of this fact. 
 
\begin{lemma} \label{bon} Let $C$ and $D$ be sets of coset representatives of $G/H$ and $\wh{G}/H^{\perp}$ respectively, and consider $\Lambda=C\times D$. 
Then, $S(\chi_H, \Lambda)$ is an orthonormal basis for $L^2(G)$.
\end{lemma}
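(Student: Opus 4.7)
The proof has two pieces: orthonormality and completeness, both following from the interplay of the coset decomposition $G=\bigsqcup_{x\in C}(H+x)$ with the Pontryagin duality $\widehat H\cong \widehat G/H^\perp$, under the normalizations $m_G(H)=1$ and $m_{\widehat G}(H^\perp)=1$ fixed in the preliminaries.

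For orthonormality, I would expand $\langle M_{\xi_1}T_{x_1}\chi_H, M_{\xi_2}T_{x_2}\chi_H\rangle$ directly. The product $T_{x_1}\chi_H\cdot \overline{T_{x_2}\chi_H}$ is the indicator of $(H+x_1)\cap(H+x_2)$, which vanishes for $x_1\neq x_2$ because $C$ is a section of $G/H$. For $x_1=x_2=x$, a translation in the integration variable reduces the inner product to $\overline{\langle x,\xi_1-\xi_2\rangle}\int_H\langle h,\xi_2-\xi_1\rangle\,dm_G(h)$. The integral equals $m_G(H)=1$ if $\xi_2-\xi_1\in H^\perp$ and vanishes otherwise; since $D$ is a section of $\widehat G/H^\perp$, this forces $\xi_1=\xi_2$.

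For completeness, I would establish Parseval's identity $\|f\|^2=\sum_{(x,\xi)\in\Lambda}|\langle f,M_\xi T_x\chi_H\rangle|^2$ for every $f\in L^2(G)$. Decomposing along the cosets, $\|f\|_{L^2(G)}^2=\sum_{x\in C}\|f_x\|_{L^2(H)}^2$ where $f_x(h):=f(h+x)$. For each fixed $x\in C$, the coefficient $\langle f,M_\xi T_x\chi_H\rangle$ equals, up to the unit-modulus factor $\overline{\langle x,\xi\rangle}$, the Fourier coefficient of $f_x$ at the character $\xi|_H$. Since the restriction map realizes $D$ as a complete set of representatives of $\widehat H\cong\widehat G/H^\perp$, the family $\{\langle\cdot,\xi\rangle|_H\}_{\xi\in D}$ is an orthonormal basis of $L^2(H)$ under the chosen normalizations, and Parseval on $L^2(H)$ gives $\|f_x\|^2=\sum_{\xi\in D}|\langle f,M_\xi T_x\chi_H\rangle|^2$. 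Summing over $x\in C$ concludes.

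The main technical point is bookkeeping the Haar measure normalizations: verifying that with $m_G(H)=1$, $m_H=m_G|_H$, and counting measure on $\widehat G/H^\perp$, the characters $\{\langle\cdot,\xi\rangle|_H\}_{\xi\in D}$ indeed form an orthonormal basis of $L^2(H)$ with no spurious multiplicative constants. This is precisely what the normalization convention adopted in Section~\ref{sec:preliminaries} guarantees, so once invoked, the argument proceeds cleanly.
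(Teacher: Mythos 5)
Your proof is correct. Note that the paper does not actually supply its own argument for this lemma: it simply defers to \cite[Theorem 2.7, Case II]{GS07}. Your write-up is therefore a self-contained replacement, and both of its halves are sound. Orthonormality reduces, exactly as you say, to the disjointness of the cosets $H+x$ for $x\in C$ together with the orthogonality relation $\int_H \langle h,\xi\rangle\,dm_G(h)=m_G(H)\,\delta_{\xi\in H^\perp}$, and the normalization $m_G(H)=1$ removes any constant. Completeness follows from the orthogonal decomposition $L^2(G)=\bigoplus_{x\in C}L^2(H+x)$ and the fact that the restriction map $\wh{G}\to\wh{H}$ is surjective with kernel $H^\perp$ (this is the identification $\wh{H}\cong\wh{G}/H^\perp$ recorded in the preliminaries), so that $\{\langle\cdot,\xi\rangle|_H\}_{\xi\in D}$ exhausts $\wh{H}$ and hence, by Peter--Weyl for the compact group $H$ with $m_H(H)=1$, is an orthonormal basis of $L^2(H)$; Parseval on each coset then sums to Parseval on $G$. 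The one step worth making fully explicit if you write this out is the surjectivity of restriction (every character of $H$ extends to $G$), since that is what guarantees $D$ really parametrizes all of $\wh{H}$ rather than a proper subgroup. With that noted, the argument is complete and arguably more transparent than the citation the authors give.
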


\subsection{Bessel sequences} In this section we show one necessary and one sufficient condition for a Gabor system to be a Bessel sequence of $L^2(G)$.

We begin by proving that if a Gabor system is a Bessel sequence, then $\Lambda$ must have finite upper density. This was  proved before for $\R^d$   in \cite[Theorem 3.1]{CDH99}. 

\begin{theorem}\label{thm:density-Bessel-one-generator}
Let $\varphi\in L^2(G)$ and let $\Lambda\subseteq  G\times\widehat G$. If $S(\varphi,\Lambda)$ is a Bessel sequence in $L^2(G)$, then $D^+(\Lambda)<\infty$.
\end{theorem}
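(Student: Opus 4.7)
My plan is to insert carefully chosen unit-norm test functions into the Bessel inequality and then appeal to the product-group analogue of Lemma \ref{lem:density}. The key identity I will use is
\[
|\langle M_\eta T_y \varphi,\, M_\xi T_x \varphi\rangle| = |V_\varphi\varphi(x-y,\, \xi-\eta)|, \qquad (x,\xi),(y,\eta)\in G\times\wh G,
\]
which I will verify by writing out the defining integral for the inner product, applying the intertwining relation $M_\zeta T_u = \langle u,\zeta\rangle T_u M_\zeta$, and changing variables; the character that appears out front is a unit modulus constant and disappears under the absolute value. One may assume $\varphi\neq 0$, as the conclusion is vacuous otherwise.

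Since $V_\varphi\varphi$ is continuous on $G\times\wh G$ and $V_\varphi\varphi(0,0)=\|\varphi\|^2>0$, I will use the expansiveness hypotheses on both $A$ and $A^*$ to produce an $n_0\in\Z$ such that
\[
|V_\varphi\varphi(u,\zeta)|^2 \geq \tfrac{1}{2}\|\varphi\|^4 \qquad \text{for all } (u,\zeta)\in Q_{n_0}(0,0).
\]
Indeed, relation (\ref{exp}) and its analogue for $A^*$ give $\bigcap_{n\leq 0}(A^n H\times (A^*)^n H^\perp) = \{(0,0)\}$, and the decreasing nested family $\{Q_n(0,0)\}_{n\leq 0}$ is a neighborhood basis of the origin in $G\times\wh G$ by the discussion surrounding (\ref{def:bola}).

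Then, for arbitrary $(y,\eta)\in G\times\wh G$, I will feed the test function $f = M_\eta T_y\varphi$, of norm $\|\varphi\|$, into the Bessel inequality with constant $B$ to obtain
\[
\sum_{(x,\xi)\in\Lambda} |V_\varphi\varphi(x-y,\xi-\eta)|^2 \leq B\|\varphi\|^2.
\]
Restricting the left-hand side to indices $(x,\xi)\in\Lambda\cap Q_{n_0}(y,\eta)$, using the equivalence $(x,\xi)\in Q_{n_0}(y,\eta)\iff (x-y,\xi-\eta)\in Q_{n_0}(0,0)$, and inserting the pointwise lower bound above, I obtain the uniform estimate
\[
\#\bigl(\Lambda\cap Q_{n_0}(y,\eta)\bigr) \leq \frac{2B}{\|\varphi\|^2} \qquad \text{for every } (y,\eta)\in G\times\wh G.
\]

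The final step is to apply the obvious product-group version of Lemma \ref{lem:density}, in which the ambient group is $G\times\wh G$, the compact open subgroup is $H\times H^\perp$, and the expansive automorphism is $A\otimes A^*$ with modulus $|A|^2$. The uniform bound just obtained is precisely condition $(ii)$ of that lemma at scale $n=n_0$, and so $D^+(\Lambda)<\infty$. I do not expect any serious obstacle: the short-time Fourier identity and the extension of Lemma \ref{lem:density} to the product setting are both routine (the proof of Lemma \ref{lem:density} only uses the abstract structure of a compact open subgroup together with an expansive automorphism). The real content is the clean pairing of the continuity of $V_\varphi\varphi$ at the origin with the neighborhood basis of boxes furnished by the two expansive automorphisms.
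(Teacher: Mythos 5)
Your proof is correct and follows essentially the same route as the paper's: both exploit continuity of the short-time Fourier transform to bound $|V_\varphi(\cdot)|$ from below on a small box $Q_{n_0}$, move that box around via the covariance of time-frequency shifts, and conclude from the Bessel bound together with the (product-group version of) Lemma \ref{lem:density}. The only cosmetic differences are that you test against time-frequency shifts of $\varphi$ itself and center the box at the origin, arguing directly, whereas the paper tests against shifts of an auxiliary unit vector $f$ with $\langle \varphi, f\rangle\neq 0$ and argues by contradiction.
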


\begin{proof}
Let $f\in L^2(G)$ with  $\|f\|_2=1$ such that $\langle \varphi, f \rangle\neq 0$ and define $A_\varphi f:G\times\widehat G\to\mathbb R_{\geq 0}$ $$A_\varphi f(x,\xi):=|V_\varphi  f(x,\xi)|.$$
Then, $A_\varphi f$ is continuous on $G\times\widehat G$. 

As   $A_\varphi f\neq 0$, there exists $(x_0,\xi_0)\in G\times\widehat G$ and $n_0\in\Z$ such that $\eta :=\inf\{A_\varphi f(x,\xi):\,(x,\xi)\in Q_{n_0}(x_0,\xi_0)\}>0$.

If we had $D^+(\Lambda)=\infty$, by Lemma \ref{lem:density} for each $N>0$ there should exist 
some $(x_N,\xi_N)$ such that $\#\{\Lambda\cap Q_{n_0}(x_N,\xi_N)\}\geq N$ .

Now, note that if $(x,\xi)\in Q_{n_0}(x_N,\xi_N)$, then 
$(x,\xi)-(x_N,\xi_N)+(x_0,\xi_0)\in Q_{n_0}(x_0,\xi_0)$ and thus
$$ |\langle f, M_{\xi-\xi_N+\xi_0} T_{x-x_N+x_0} \varphi\rangle |\geq\eta.$$
Since
$ |\langle f, M_{\xi-\xi_N+\xi_0} T_{x-x_N+x_0} \varphi\rangle|=|\langle M_{\xi_0-\xi_N} T_{x_0-x_N} f,  M_\xi T_x\varphi  \rangle |$
we have that  
$$
\sum_{(x,\xi)\in \Lambda\cap Q_{n_0}(x_N,\xi_N)}|\langle M_{\xi_0-\xi_N} T_{x_0-x_N} f,  M_\xi T_x\varphi  \rangle |^2
\geq\eta^2N,
$$ for all $N\in\N$. Hence, $S(\varphi,\Lambda)$ can not be a Bessel sequence because 
$\|M_{\xi_0-\xi_N} T_{x_0-x_N} f\|_2=\|f\|_2=1$.
\end{proof}

The above theorem extends to a finite union of Gabor systems. More precisely, let $\Lambda_1,\dots, \Lambda_r\subseteq G\times \widehat G$ be sequences each indexed in $I_1,\dots, I_r$, respectively. That is, $\Lambda_k=\{(x_{i,k},\xi_{i,k})\}_{i\in I_k}$, for $1\leq k\leq r$.  Define $I=\{(i,k):\,i\in I_k, 1\leq k\leq r\}$ and $\Lambda$
as the sequence $\{ (x_{i,k},\xi_{i,k}): (i,k)\in I\}$. By abuse of notation, we will simply write $\Lambda=\bigcup_{k=1}^r \Lambda_k$ and say that $\Lambda$ is the {\it disjoint union of $\Lambda_1,\dots, \Lambda_r$}. Once this is clear, we can state the desired result. 

\begin{theorem}\label{thm:density-Bessel-multi-generators}
For $1\leq k\leq r$, let $\varphi_k\in L^2(G)$ and $\Lambda_k\subseteq G\times \widehat G$ a sequence. Consider $\Lambda$ the disjoint union of $\Lambda_1,\dots, \Lambda_r$. 
If $\bigcup_{k=1}^rS(\varphi_k, \Lambda_k)$ is a Bessel sequence, then $D^+(\Lambda)<\infty$. 
\end{theorem}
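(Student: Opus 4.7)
The plan is to bootstrap from the single-generator version, Theorem \ref{thm:density-Bessel-one-generator}, using the subadditivity of upper Beurling density under finite disjoint unions; no new analytic input is required.

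\textbf{Step 1 (reduction to the single-generator case).} For each $1\le k\le r$, the family $S(\varphi_k,\Lambda_k)$ is a subfamily of $\bigcup_{j=1}^r S(\varphi_j,\Lambda_j)$, and since the Bessel condition \eqref{framecondition} is a bound on a sum of nonnegative terms, any subfamily of a Bessel sequence is itself Bessel with at most the same bound. Thus $S(\varphi_k,\Lambda_k)$ is Bessel for every $k$. Provided each $\varphi_k\neq 0$ (as is implicitly required already in Theorem \ref{thm:density-Bessel-one-generator}; indices with $\varphi_k=0$ contribute nothing to the Bessel sum and can be dropped), Theorem \ref{thm:density-Bessel-one-generator} applied separately to each $k$ yields $D^+(\Lambda_k)<\infty$.

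\textbf{Step 2 (subadditivity of $D^+$).} By the disjoint-union convention recalled immediately before the theorem, $\Lambda$ is the indexed sequence $\{(x_{i,k},\xi_{i,k}):(i,k)\in I\}$ with $I$ the disjoint union of the $I_k$'s; in particular, counting with multiplicity, for every $n\in\Z$ and $(x,\xi)\in G\times\wh G$,
$$\#(\Lambda\cap Q_n(x,\xi))=\sum_{k=1}^r \#(\Lambda_k\cap Q_n(x,\xi)).$$
Taking the maximum over $(x,\xi)$, dividing by $|A|^{2n}$, and applying the elementary inequalities $\max\sum\le\sum\max$ and $\limsup\sum\le\sum\limsup$ (both valid for finite sums of nonnegative quantities), we conclude
$$D^+(\Lambda)\le\sum_{k=1}^r D^+(\Lambda_k)<\infty.$$

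There is no serious obstacle here: the argument just combines the single-generator result with a straightforward subadditivity of $D^+$. The only point worth a moment of care is that ``disjoint union'' in the hypothesis refers to the indexed sequence, which is precisely what produces exact additivity (rather than a mere inequality) of the counting functions across the $\Lambda_k$'s.
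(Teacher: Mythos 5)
Your proof is correct and follows essentially the same route as the paper's: restrict to each subfamily $S(\varphi_k,\Lambda_k)$, invoke Theorem \ref{thm:density-Bessel-one-generator}, and use exact additivity of the counting functions over the disjoint union together with subadditivity of $\limsup$ and $\max$ to get $D^+(\Lambda)\leq\sum_{k=1}^r D^+(\Lambda_k)<\infty$. Your parenthetical about needing $\varphi_k\neq 0$ is a legitimate observation (the single-generator proof chooses $f$ with $\langle\varphi,f\rangle\neq 0$, and a zero window with a dense $\Lambda_k$ would actually falsify the statement as written), though note that ``dropping'' such indices would also remove $\Lambda_k$ from $\Lambda$ and hence change the conclusion; this hypothesis is implicit in the paper as well.
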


\begin{proof}
Note that since $\bigcup_{k=1}^rS(\varphi_k, \Lambda_k)$ is a Bessel sequence, so is $S(\varphi_k, \Lambda_k)$ for every $1\leq k\leq r$. Then, by Theorem \ref{thm:density-Bessel-one-generator} we have that  $D^+(\Lambda_k)<\infty$ for every $1\leq k\leq r$.

Now, since $\Lambda$ is the disjoint union of  $\Lambda_1,\dots, \Lambda_r$, for each $n\in\Z$ and $(x,\xi)\in G\times \widehat G$,
$$\#\{\Lambda\cap Q_n(x,\xi)\}=\sum_{k=1}^r \#\{\Lambda_k\cap Q_n(x,\xi)\},$$
and as a consequence
$$\sum_{k=1}^r D^-(\Lambda_k)\leq D^-(\Lambda)\leq D^+(\Lambda)\leq \sum_{k=1}^r D^+(\Lambda_k).$$
From here the conclusion follows. 
\end{proof}

In what follows we shall prove a weaker converse of Theorem  \ref{thm:density-Bessel-one-generator}. For this to be true, we have to assume that the generating function of the Gabor system must have a particular decay; that is, it must be a function of $ \M^1(G)$.

As a consequence of  \cite[Theorem 4.7]{JAK} (see also \cite[Theorem 11]{fe81-2}) we have that, when the representation involved 
is the Gabor representation \eqref{eq:Gabor-representation}, $\calA=\B=\M^1(G)$, and then,  Theorem \ref{projective-inequality} holds for $g\in \M^1(G)$.

Then, we have the  following result, which is a generalization of \cite[Theorem 12]{H07} to our setting.

\begin{theorem}\label{thm:Bessel} Let $\varphi\in \M^1(G)$, $\varphi\neq0$ and $\Lambda\subseteq G\times\wh{G}$ any sequence with $D^+(\Lambda)<+\infty$. Then $S(\varphi,\Lambda)$ is a Bessel sequence.
\end{theorem}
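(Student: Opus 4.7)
The plan is to dominate the Bessel sum for $S(\varphi,\Lambda)$ by the Wiener--space norm of the short--time Fourier transform $V_\varphi f$, and then invoke the continuous embedding of modulation spaces into Wiener spaces established in Theorem \ref{projective-inequality}.

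First, I would apply Theorem \ref{projective-inequality} to the Gabor projective representation $\Pi(x,\xi)=M_\xi T_x$ of the LCA group $G\times\wh G$, which carries the compact open subgroup $H\times H^\perp$. As noted just before the statement of the theorem to be proved, for this representation $\B=\M^1(G)$, so $\varphi$ is an admissible window. Taking $p=2$ and using the identification $\M^2(G)=L^2(G)$ coming from the STFT Plancherel identity, the theorem produces a constant $K>0$ such that
$$\|V_\varphi f\|_{W(\calC(G\times\wh G),\ell^2)}\le K\|f\|_2 \qquad \forall\, f\in L^2(G).$$
Expanding this Wiener norm over a section $C\times D$ of $(G\times\wh G)/(H\times H^\perp)$, where $C$ is a section of $G/H$ and $D$ of $\wh G/H^\perp$, this reads
$$\sum_{(c,d)\in C\times D}\ \sup_{(x,\xi)\in Q_0(c,d)}|V_\varphi f(x,\xi)|^2\le K^2\|f\|_2^2.$$

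Next, I would exploit the density hypothesis. The analog of Lemma \ref{lem:density} on $G\times\wh G$ with expansive automorphism $A\otimes A^*$ and compact open subgroup $H\times H^\perp$ yields an integer $N_0\ge 1$ such that $\#\{\Lambda\cap Q_0(c,d)\}\le N_0$ for every $(c,d)\in C\times D$. Since the cells $\{Q_0(c,d):(c,d)\in C\times D\}$ partition $G\times\wh G$, I would then split the Bessel sum accordingly and combine the two bounds:
\begin{align*}
\sum_{(x,\xi)\in\Lambda}|\langle f,M_\xi T_x\varphi\rangle|^2
&=\sum_{(c,d)\in C\times D}\ \sum_{(x,\xi)\in\Lambda\cap Q_0(c,d)}|V_\varphi f(x,\xi)|^2\\
&\le N_0\sum_{(c,d)\in C\times D}\ \sup_{(x,\xi)\in Q_0(c,d)}|V_\varphi f(x,\xi)|^2\ \le\ N_0 K^2\|f\|_2^2,
\end{align*}
which is precisely the Bessel inequality with bound $N_0K^2$.

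The substantive work has already been done in Theorem \ref{projective-inequality}; what remains is a partition-and-count argument and I do not anticipate a genuine obstacle. The only points that need to be verified carefully are that the hypotheses transfer cleanly to the Gabor setting on $G\times\wh G$: namely, that $H\times H^\perp$ is a compact open subgroup of $G\times\wh G$, that the Gabor representation is an irreducible $\alpha$-projective representation with $\B=\M^1(G)$, that $\M^2(G)=L^2(G)$ with equivalent norm, and that the characterization of finite upper density in Lemma \ref{lem:density} passes from $G$ to the time-frequency group $G\times\wh G$ endowed with $A\otimes A^*$.
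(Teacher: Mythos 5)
Your proposal is correct and follows essentially the same route as the paper's proof: partition $G\times\wh G$ into the cells $Q_0(c,d)$, use Lemma \ref{lem:density} to bound the number of points of $\Lambda$ per cell by $N_0$, dominate the Bessel sum by $N_0\|V_\varphi f\|_{W(\calC,\ell^2)}^2$, and conclude via Theorem \ref{projective-inequality} together with the identity $\|f\|_{\M^2}=\|V_\varphi f\|_2=\|f\|_2\|\varphi\|_2$. The only differences are cosmetic (you write the constant as $K^2$ where the paper writes $K$, and you phrase the last step as $\M^2(G)=L^2(G)$ rather than invoking the orthogonality relation explicitly).
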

\begin{proof} By Lemma \ref{lem:density}, since $D^+(\Lambda)<+\infty$ there exists $N_0>0$ such that $\#\{ \Lambda\cap Q_0(c,d)\}\leq N_0$ for all $(c,d)\in C\times D$. Then for $f\in L^2(G)$,
\begin{align*}
\sum_{(x,\xi)\in\La} |\langle f, M_\xi T_x \p \rangle|^2 &=\sum_{(x,\xi)\in\La} |V_\p f(x,\xi)|^2 \\
&=\sum_{(c,d)\in C\times D} \sum_{(x,\xi)\in\La\cap Q_0(c,d)}|V_\p f(x,\xi)|^2 \\
&\leq \sum_{(c,d)\in C\times D} N_0 \sup_{(x,\xi)\in Q_0(c,d)} |V_\p f(x,\xi)|^2 \\
&= N_0 \|V_\p f\|_{W(\calC,\ell^2)}^2\leq N_0  K \|f\|_{\M^2}^2,
\end{align*}
where the last inequality is the result of Theorem \ref{projective-inequality}. Finally, 
since $\|f\|_{\M^2}=\|V_\varphi f\|_2$ and by the well-known orthogonality relationship of the short-time Fourier transform (see \cite[Section 2.2]{FG1}, $\|V_\varphi f\|_2 =\|f\|_2\|\varphi\|_2$, we conclude the result. 
\end{proof}

It is known that every locally compact abelian group $G$  is algebraically and topologically isomorphic to $\R^d\times G_0$, where $G_0$ is an LCA group with an open compact subgroup (see, e.g.,  \cite{DE14}). For the case where $G_0$ is  an expansible group
we can combine Theorem \ref{thm:Bessel} with  \cite[Theorem 12]{H07} to prove a similar statement for the product group $\R^d\times G_0$.

\begin{proposition} Let $G_0$ be an expansible LCA group.  Take  $g_1\in M^1(\R^d)$, $g_2\in M^1(G_0)$ and $\Lambda=\Lambda_1\times\Lambda_2$, where $\Lambda_1\subseteq\R^{2d}$ and $\Lambda_2\subseteq G_0\times\wh{G_0}$. 
Suppose that $\Lambda_1$ and $\Lambda_2$ have finite upper density, where the density of 
$\Lambda_1$ is the usual Beurling density defined in the introduction, and the density of $\Lambda_2$ is as in Definition \ref{def:density}.
If we consider $g\in L^2(\mathbb R^d\times G_0)$ given by  $g=g_1\otimes g_2$,  then $S(g,\Lambda)$ is a Bessel sequence for $L^2(\R^d\times G_0)$.
\end{proposition}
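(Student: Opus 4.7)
The argument reduces the Bessel condition for the product system to the two one-factor Bessel inequalities that are already at our disposal. Since $g_1\in M^1(\R^d)$ and $D^+(\La_1)<\infty$, \cite[Theorem 12]{H07} gives a Bessel bound $B_1$ for $S(g_1,\La_1)$ in $L^2(\R^d)$; since $g_2\in M^1(G_0)$ and $D^+(\La_2)<\infty$, Theorem \ref{thm:Bessel} gives a Bessel bound $B_2$ for $S(g_2,\La_2)$ in $L^2(G_0)$. Under the canonical identification $\wh{\R^d\times G_0}\simeq\wh{\R^d}\times\wh{G_0}$, the Gabor shift of the tensor window factors as
\[
M_{(\xi_1,\xi_2)}T_{(x_1,x_2)}(g_1\otimes g_2)=(M_{\xi_1}T_{x_1}g_1)\otimes(M_{\xi_2}T_{x_2}g_2),
\]
and after the obvious coordinate swap the set $\La=\La_1\times\La_2$ sits inside $(\R^d\times G_0)\times\wh{\R^d\times G_0}$ in the natural way.

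Fix $f\in L^2(\R^d\times G_0)$. For each $\la_2=(x_2,\xi_2)\in\La_2$ define the partial inner product $h_{\la_2}(x_1):=\langle f(x_1,\cdot),\,M_{\xi_2}T_{x_2}g_2\rangle_{L^2(G_0)}$, which lies in $L^2(\R^d)$ by Fubini (and has $L^2$-norm at most $\|f\|_{L^2(\R^d\times G_0)}$). Then $\langle f,M_{(\xi_1,\xi_2)}T_{(x_1,x_2)}(g_1\otimes g_2)\rangle=\langle h_{\la_2},M_{\xi_1}T_{x_1}g_1\rangle_{L^2(\R^d)}$, so summing first over $\la_1\in\La_1$ and applying the Bessel bound for $S(g_1,\La_1)$ yields $\sum_{\la_1}|\langle f,M_{(\xi_1,\xi_2)}T_{(x_1,x_2)}g\rangle|^2\leq B_1\|h_{\la_2}\|_{L^2(\R^d)}^2$. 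Summing over $\la_2\in\La_2$ and interchanging sum and integral via Tonelli (all terms are non-negative), then applying the $L^2(G_0)$-Bessel bound for $S(g_2,\La_2)$ pointwise to $f(x_1,\cdot)$, one obtains
\[
\sum_{(\la_1,\la_2)\in\La}|\langle f,M_{(\xi_1,\xi_2)}T_{(x_1,x_2)}g\rangle|^2\leq B_1\int_{\R^d}\sum_{\la_2\in\La_2}|\langle f(x_1,\cdot),M_{\xi_2}T_{x_2}g_2\rangle|^2\,dx_1\leq B_1B_2\|f\|_{L^2(\R^d\times G_0)}^2,
\]
so $B_1B_2$ is a Bessel constant for $S(g,\La)$.

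\textbf{Main obstacle.} There is no substantive obstacle; the argument is essentially a Fubini/Tonelli exchange bracketed by the two coordinate-wise Bessel estimates. The only care required is notational, namely the identification of $\La\subseteq(\R^d\times G_0)\times\wh{\R^d\times G_0}$ with $\La_1\times\La_2$, where $\La_1\subseteq\R^{2d}$ and $\La_2\subseteq G_0\times\wh{G_0}$, via the obvious coordinate swap, and checking that the Fubini applications are justified (by $f\in L^2$ and by non-negativity of all summands).
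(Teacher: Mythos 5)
Your proof is correct and follows essentially the same route as the paper: both factor the coefficient $\langle f, M_{(\xi_1,\xi_2)}T_{(x_1,x_2)}(g_1\otimes g_2)\rangle$ as an iterated short-time Fourier transform, sum over $\Lambda_1$ using the Euclidean Bessel bound from \cite[Theorem 12]{H07}, and then sum over $\Lambda_2$ via Tonelli and the $G_0$-side Bessel estimate. The only difference is cosmetic: you invoke Theorem \ref{thm:Bessel} as a black box for the $G_0$ factor, whereas the paper re-runs its proof inline through the counting bound $N_0$ and the amalgam-norm inequality of Theorem \ref{projective-inequality}.
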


\begin{proof} First observe that for $(x_1,x_2,\xi_1,\xi_2)\in (\R^d\times G_0)\times (\R^d\times\wh{G_0})$ and $f\in L^2(\R^d\times G_0)$, we have 
\begin{align*}
V_gf(x_1,x_2,\xi_1,\xi_2)&=\int_{\R^d}\int_{G_0} f(y_1,y_2) \overline{g(y_1-x_1,y_2-x_2)}\overline{\langle y_1, \xi_1\rangle}\overline{\langle y_2, \xi_2\rangle}dm_{G_0}(y_2)dy_1 \\
&=\int_{\R^d}\int_{G_0} f(y_1,y_2) \overline{g_1(y_1-x_1)g_2(y_2-x_2)}\overline{\langle y_1, \xi_1\rangle}\overline{\langle y_2, \xi_2\rangle}dm_{G_0}(y_2)dy_1 \\
&\int_{\R^d}\overline{g_1(y_1-x_1)\langle y_1, \xi_1\rangle}\left(\int_{G_0}f(y_1, y_2)\overline{g_2(y_2)\langle y_2, \xi_2\rangle}dm_{G_0}(y_2)\right)dy_1 \\
&=V_{g_1}\left(V_{g_2}f_{\bullet}(x_2,\xi_2)\right)(x_1,\xi_1),
\end{align*}

where by $f_{\bullet}$ we denote the function the is obtained when the first variable of $f$ is fixed. 

\noindent On the other hand, since both upper densities are finite,  we know from Lemma \ref{lem:density} and \cite[Lemma 2.3]{CDH99} that there exist $N_0$ and $N_1 \in \N$ such that
\begin{align*}
&\#(\Lambda_2\cap Q_0(x,\xi))\leq N_0 \,\mbox{for all}\,\, (x,\xi)\in G_0\times\wh{G_0}\,\,\mbox{and}\\
&\#(\Lambda_1\cap B_1(t))\leq N_1 \,\mbox{for all}\,\, t\in\R^{2d}, 
\end{align*}
where $B_1(0)=[0,1]^{2d}$ and $B_1(t)=B_1(0)+t$.

\noindent Putting this together we obtain
\begin{align*}
\sum_{(x_1,x_2,\xi_1,\xi_2)\in\Lambda}&|V_gf(x_1,x_2,\xi_1,\xi_2)|^2 = \sum_{(x_2,\xi_2)\in\Lambda_2}\sum_{(x_1,\xi_1)\in\Lambda_1}|V_{g_1}\left(V_{g_2}f_{\bullet}(x_2,\xi_2)\right)(x_1,\xi_1)|^2\\
&= \sum_{(x_2,\xi_2)\in \Lambda_2} \sum_{j\in\Z^{2d}}\sum_{(x_1,\xi_1)\in \Lambda_1\cap B_1(j)}|V_{g_1}\left(V_{g_2}f_{\bullet}(x_2,\xi_2)\right)(x_1,\xi_1)|^2 \\
&\leq \sum_{(x_2,\xi_2)\in \Lambda_2} \sum_{j\in\Z^{2d}} N_1 \sup_{(x_1,\xi_1)\in \Lambda_1\cap B_1(j)}|V_{g_1}\left(V_{g_2}f_{\bullet}(x_2,\xi_2)\right)(x_1,\xi_1)|^2 \\
&= N_1 \sum_{(x_2,\xi_2)\in \Lambda_2} \|V_{g_1}(V_{g_2}f_{\bullet}(x_2,\xi_2))\|_{W(\R^d)}^2 \\
&\leq N_1 \sum_{(x_2,\xi_2)\in\Lambda_2} \|V_{g_2}f_{\bullet}(x_2,\xi_2)\|_{L^2(\R^d)}^2 K_1 \|g_1\|_{L^2(\R^d)}^2, 
\end{align*}
where in the last inequality we used \cite[Theorem 12]{H07} and the fact that $V_{g_2}f_{\bullet}(x_2,\xi_2)\in L^2(\R^d)$ for a.e.\ $(x_2,\xi_2)\in G_0\times\wh{G_0}$.
Now, using Fubini and Theorem \ref{projective-inequality} we have 
\begin{align*}
N_1K_1 \|g_1\|_{L^2(\R^d)}^2 \sum_{(x_2,\xi_2)\in\Lambda_2} &\|V_{g_2}f_{\bullet}(x_2,\xi_2)\|_{L^2(\R^d)}^2 = K_0 \sum_{(x_2,\xi_2)\in\Lambda_2} \int_{\R^d}|V_{g_2}f_{y_1}(x_2,\xi_2)|^2dy_1
\\&= K_0 \int_{\R^d} \sum_{(c,d)\in C\times D}\sum_{(x_2,\xi_2)\in\Lambda_2\cap Q_0(c,d)}|V_{g_2}f_{y_1}(x_2,\xi_2)|^2dy_1\\
&\leq K_0 \int_{\R^d} \sum_{(c,d)\in C\times D} N_0 \sup_{(x_2,\xi_2)\in\Lambda_2\cap Q_0(c,d)}|V_{g_2}f_{y_1}(x_2,\xi_2)|^2 dy_1\\
&= K_0 N_0 \int_{\R^d} \|V_{g_2}f_{y_1}\|_{W(G_0)}^2 dy_1 \\
&\leq K \int_{\R^d} \|f_{y_1}\|_{L^2(G_0)}^2 dy_1 \\
&= K \int_{\R^d}\int_{G_0} |f(y_1,y_2)|^2 dy_2dy_1 = K \|f\|^2_{L^2(\R^d\times G_0)}.
\end{align*}
Since $\sum_{(x_1,x_2,\xi_1,\xi_2)\in\La} |\langle f, M_{(\xi_1,\xi_2)} T_{(x_1,x_2)} g \rangle|^2 =\sum_{(x_1,x_2,\xi_1,\xi_2)\in\La} |V_gf(x_1,x_2,\xi_1,\xi_2)|^2 $, the above computation shows that $S(g,\Lambda)$ is a Bessel sequence. 
\end{proof}

 As in the Euclidean case (see for instance \cite[Theorem 1.1]{CDH99} and \cite[Theorem 5.7]{H08}), it is possible to prove a Density Theorem for Gabor frames in this context as well. This is a consequence of  \cite[Corollary 3.4]{CV23} which has a very sophisticated proof. However, in our setting, we can easily deduce it from \cite[Theorem 4]{GR08}. We include the reasoning for the sake of completeness.

 \begin{theorem}\label{thm: comparison} Let $\Lambda,\Delta\subseteq G\times\wh{G}$ be uniformly separated sets and $\varphi, h\in L^2(G)$ such that $S(\varphi,\Lambda)$ and $S(h,\Delta)$ are a frame and a Riesz basis for $L^2(G)$ respectively. Then 
 $$D^-(\Delta)\leq D^-(\Lambda) \,\,\,\mbox{and}\,\,\,D^+(\Delta)\leq D^+(\Lambda).$$ 
 \end{theorem}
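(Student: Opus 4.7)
The plan is to reduce Theorem \ref{thm: comparison} to the comparison theorem for coherent frames \cite[Theorem 4]{GK08}, using the orthonormal Gabor basis from Lemma \ref{bon} as a canonical reference of density $1$. Fix sections $C \subseteq G$ of $G/H$ and $D \subseteq \wh{G}$ of $\wh{G}/H^\perp$, and set $\Gamma = C \times D$. By Lemma \ref{bon}, $S(\chi_H, \Gamma)$ is an orthonormal basis of $L^2(G)$, and hence simultaneously a frame and a Riesz basis. Applying the computation of Example \ref{ex:densidadC} separately to each factor in $G \times \wh{G}$, one checks that $\#(\Gamma \cap Q_n(x,\xi)) = |A|^{2n}$ for every $(x,\xi) \in G \times \wh{G}$ and every $n \geq 0$, so $\Gamma$ has uniform density $D(\Gamma) = 1$ in the sense of Definition \ref{def:density}.

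Next, since $S(\varphi,\Lambda)$, $S(h,\Delta)$, and $S(\chi_H, \Gamma)$ are all coherent systems under the projective Gabor representation $\Pi(x,\xi) = M_\xi T_x$, and since $\Lambda$, $\Delta$, and $\Gamma$ are all uniformly separated, they fall into the framework of \cite[Theorem 4]{GK08}; if one wishes to work with a genuine unitary representation, it suffices to lift to the Mackey group $(G \times \wh{G}) \times \mathbb{T}$ via \eqref{def:unitaryMackey}, an operation that preserves separation and density. The conclusion of the comparison theorem, applied to the pair consisting of the frame $S(\varphi,\Lambda)$ and the Riesz basis $S(h,\Delta)$ with reference $\Gamma$, reads
\[
D^-_\Gamma(\Delta) \leq D^-_\Gamma(\Lambda) \qquad \text{and} \qquad D^+_\Gamma(\Delta) \leq D^+_\Gamma(\Lambda),
\]
where $D^{\pm}_\Gamma$ denotes density taken relative to $\Gamma$.

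Finally, because $\#(\Gamma \cap Q_n(x,\xi)) = |A|^{2n}$ exactly for $n \geq 0$, the relative density $D^{\pm}_\Gamma$ coincides with our $D^{\pm}$ from Definition \ref{def:density}, and the displayed inequalities deliver the conclusion. The main obstacle I expect is the bookkeeping involved in matching the density formalism of \cite{GK08} — originally designed for compactly generated LCA groups with reference lattices — against the formalism used here based on the expansive automorphism $A$. Concretely, one must verify that the neighborhood basis $\{Q_n\}_{n \geq 0}$ is admissible in the comparison framework and that our $\Gamma$ plays exactly the role of the reference lattice. Once these identifications are in place, the argument becomes the ``straightforward'' consequence of the comparison theorem advertised in the introduction.
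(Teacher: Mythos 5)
Your proposal correctly senses that the theorem should follow from a comparison theorem, but the version you invoke and the way you invoke it leave the actual content of the proof unaddressed. The relevant tool is the comparison theorem for coherent frames, \cite[Theorem 4]{GR08} (the citation to \cite{GK08} in the introduction notwithstanding); the framework of \cite{GK08} is built on compactly generated LCA groups and reference \emph{lattices}, and, as noted in Section \ref{sec:density}, the expansible groups considered here need not possess lattices at all, so your ``reference'' $\Gamma=C\times D$ (a section, not a subgroup) cannot play that role. More importantly, no version of the comparison theorem outputs inequalities between ``relative densities $D^{\pm}_\Gamma$'': what \cite[Theorem 4]{GR08} gives, after first rescaling $h$ so that $\|h\|_2^2=B$ with $B$ the upper frame bound of $S(\varphi,\Lambda)$ (a normalization you omit), is that for every $\varepsilon>0$ there is a compact set $L\subseteq G\times\wh{G}$ with
$$(1-\varepsilon)\,\#\{\Delta\cap(K+(x,\xi))\}\leq \#\{\Lambda\cap(K+L+(x,\xi))\}$$
for all compact $K$ and all $(x,\xi)$. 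The orthonormal basis of Lemma \ref{bon} is not needed for this theorem at all; it only enters in the subsequent corollary, to pin the density of a reference Riesz basis at $1$.

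The genuine gap is the passage from this counting inequality to the density inequality, which is precisely where the expansive structure is used and which your proposal skips. One must take $K=Q_n(0,0)$ and show that the fattening by the unknown compact set $L$ is eventually absorbed: since $Q_n(0,0)=A^nH\times (A^*)^nH^\perp$ is a compact open \emph{subgroup} and these subgroups exhaust $G\times\wh{G}$ by expansiveness (via \eqref{exp}), there is $n_0$ with $L\subseteq Q_{n_0}(0,0)$, hence $Q_n(0,0)+L=Q_n(0,0)$ for all $n\geq n_0$. Only then does dividing by $|A|^{2n}$, taking $\limsup$ or $\liminf$, and letting $\varepsilon\to 0$ yield $D^{\pm}(\Delta)\leq D^{\pm}(\Lambda)$. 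Without this absorption step the enlargement $K+L$ would a priori distort the counts, and the argument would not close; your ``bookkeeping'' caveat is in fact the whole proof.
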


 \begin{proof}
First, we will modify $h\in L^2(G)$ to be the scalar multiple $\tilde{h}$ such that $\|\tilde{h}\|^2_2=B$, where $B$ is the upper frame bound of $S(\varphi,\Lambda)$. With this replacement $S(\tilde{h},\Delta)$ is still a Riesz basis for $L^2(G)$. By \cite[Theorem 4]{GR08} for a given $\varepsilon>0$ there exists a compact set $L\subseteq G\times\wh{G}$ such that
\begin{equation} \label{eq:comparison}
    (1-\varepsilon)\#\{\Delta\cap (K+(x,\xi))\}\leq \#\{\Lambda\cap (K+L+(x,\xi))\},
\end{equation}
for every compact set $K\subseteq G\times\wh{G}$ and for every $(x,\xi)\in G\times\wh{G}$. 

\noindent Since both $G$ and $\wh G$ are expansible and $L$ is compact, there exists $n_0\in\N$ such that $Q_n(0,0)+L=Q_n(0,0)$ for all $n\geq n_0$. Then,  \eqref{eq:comparison} implies that 
\begin{equation*}
    (1-\varepsilon)\#\{\Delta\cap Q_n(x,\xi)\}\leq \#\{\Lambda\cap Q_n(x,\xi)\}
\end{equation*}
 for all $(x,\xi)\in G\times\wh{G}$ and $n\geq n_0.$ Dividing by $|A|^{2n}$ and taking $\limsup$ (or respectively $\liminf$) we obtain
 $$(1-\varepsilon)D^{\pm}(\Delta)\leq D^{\pm}(\Lambda)$$
 for every $\varepsilon>0$. From here the result follows.
 \end{proof}

 \noindent  With the above comparison result, it is possible to deduce a density result. 
 
 \begin{corollary}
     Let $\Lambda\subset G\times\wh{G}$ be a uniformly separated set and $\varphi\in L^2(G)$. 
     \begin{enumerate}
         \item[(i)] If $S(\varphi,\Lambda)$ is a frame for $L^2(G)$ then $D^-(\Lambda)\geq 1$;
         \item[(ii)] If $S(\varphi,\Lambda)$ is a Riesz basis for $L^2(G)$ then $D(\Lambda)=1.$
     \end{enumerate}
 \end{corollary}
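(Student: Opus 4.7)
The plan is to apply the comparison result Theorem \ref{thm: comparison} twice, using as a benchmark the canonical Gabor orthonormal basis $S(\chi_H, C\times D)$ provided by Lemma \ref{bon}, where $C$ is a section of $G/H$ and $D$ is a section of $\wh G/H^\perp$. Since an orthonormal basis is simultaneously a frame and a Riesz basis, this reference system can play either role in the comparison theorem, which is exactly what is needed to bracket $D^{\pm}(\Lambda)$ from both sides.

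First I would check the two technical prerequisites on $\Delta:=C\times D$. Uniform separation is immediate: by \eqref{def:bola} one has $Q_0(x,\xi)=(x+H)\times(\xi+H^\perp)$, and since $C$ and $D$ are sections, each such box contains exactly one point of $C\times D$. Next I would record the density computation, which extends Example \ref{ex:densidadC} to the product: writing $Q_n(x,\xi)=Q_n(x)\times Q_n(\xi)$ and noting $|A|^n\cdot|A^{*}|^n=|A|^{2n}$, one has $\#\bigl((C\times D)\cap Q_n(A^n c,(A^{*})^n d)\bigr)=|A|^n\cdot|A|^n=|A|^{2n}$ for all $(c,d)\in C\times D$, giving $D(C\times D)=1$.

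For part (i), with $S(\varphi,\Lambda)$ a frame and $S(\chi_H,C\times D)$ a Riesz basis, Theorem \ref{thm: comparison} immediately yields
\[
1=D^-(C\times D)\leq D^-(\Lambda).
\]
For part (ii), since every Riesz basis is a frame, part (i) already gives $D^-(\Lambda)\geq 1$. To obtain the matching upper bound I would swap the roles in Theorem \ref{thm: comparison}: now $S(\chi_H,C\times D)$ plays the role of the frame and $S(\varphi,\Lambda)$ that of the Riesz basis, yielding
\[
D^+(\Lambda)\leq D^+(C\times D)=1.
\]
Combining with $D^-(\Lambda)\leq D^+(\Lambda)$ forces $D^-(\Lambda)=D^+(\Lambda)=1$, i.e.\ $D(\Lambda)=1$.

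There is no genuine obstacle here; the whole argument is a direct corollary of the comparison theorem once one has the canonical reference system $S(\chi_H,C\times D)$ of density one. The only points requiring a brief verification are the uniform separation of $C\times D$ and the identity $D(C\times D)=1$, both of which follow from the definitions and from the product structure of the boxes $Q_n(x,\xi)$.
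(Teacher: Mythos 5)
Your proof is correct and follows essentially the same route as the paper: compare against the reference orthonormal basis $S(\chi_H, C\times D)$ from Lemma \ref{bon}, whose density is $1$, applying Theorem \ref{thm: comparison} once for part (i) and twice with the roles swapped for part (ii). The only difference is that you explicitly verify the uniform separation of $C\times D$ and extend the density computation of Example \ref{ex:densidadC} to the product $G\times\wh G$, details the paper leaves implicit.
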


 \begin{proof}
By \Cref{bon}, the system $S(\chi_H,\Delta)$ with $\Delta$ being a section of $G/H\times\wh{G}/H^{\perp}$ is an orthonormal  basis for $L^2(G)$ and, as we calculate in \Cref{ex:densidadC}, $D(\Delta)=1$. Let us know consider $\Lambda$ and $\varphi$ as in the statement. If $S(\varphi,\Lambda)$ is a frame for $L^2(G)$ then by \Cref{thm: comparison} we have $D^-(\Lambda)\geq D^-(\Delta)=1$. If, on the other hand, $S(\varphi,\Lambda)$ is a Riesz basis for $L^2(G)$ then we apply \Cref{thm: comparison} twice interchanging the role of each system to obtain $D(\Lambda)=1$. \end{proof}

We finish by showing that in  our setting, we can construct Gabor frames generated by sets with density $|A|^{2k}$ for every $k\geq0$.

\begin{example}\label{Ex:bon}  Let $G$ be an expansible group and $A$ an expansive automorphism with respect to $H$, an open compact subgroup of $G$. Let $k\geq0$ and consider $\varphi:=|A|^{k/2}\chi_{A^{-k}H}$ and $\Lambda =\Delta\times\Gamma$ with $\Delta$ a section of $G/A^{-k}H$ and  $\Gamma$ a section of $\wh G/A^{*-k}H^{\perp}$. Then we can write $\Lambda=\Lambda_1+\Lambda_2$ where $\Lambda_1$ is a section of $G/H\times\wh{G}/H^{\perp}$ and $\Lambda_2$ is a section of $H/A^{-k}H\times H^{\perp}/A^{*-k}H^{\perp}$.

We want to calculate $D(\Lambda)$. In that sense, for $n\in\N$ we have
\begin{align*} 
\max_{(x,\xi)\in G\times\wh{G}} \#(Q_n(x,\xi)\cap \Lambda)&\leq \sum_{\lambda_2\in \Lambda_2} \max_{(x,\xi)\in G\times\wh{G}} \#(Q_{n}(x,\xi)\cap(\Lambda_1+\lambda_2)) \\
&= \sum_{\lambda_2\in \Lambda_2} \max_{(x,\xi)\in G\times\wh{G}} \#(Q_{n}(x,\xi)\cap \Lambda_1) \\ &= \#\Lambda_2 \max_{(x,\xi)\in G\times\wh{G}} \#(Q_{n}(x,\xi)\cap \Lambda_1) \\
&= |A|^{2k} \max_{(x,\xi)\in G\times \wh{G}}\# (Q_{n}(x,\xi)\cap \Lambda_1).
\end{align*}
Dividing by $|A|^{2n}$, taking $\limsup$ and applying Lemma \ref{ex:densidadC} we obtain 
\begin{align*}
D^+(\Lambda)\leq |A|^{2k} D^+(\Lambda_1)= |A|^{2k}. 
\end{align*}
By doing an analogous calculation with the minimum, we obtain $D^-(\Lambda)\geq |A|^{2k} D^-(\Lambda_1)= |A|^{2k} $ and therefore $D(\Lambda)=|A|^{2k}$.

\noindent On the other hand, we can also write  $\Gamma=\Gamma_1+\Gamma_2$ where $\Gamma_1$ and $\Gamma_2$ are sections of $\wh G/A^{*k}H^{\perp}$ and $A^{*k}H^{\perp}/A^{*-k}H^{\perp}$ respectively. As a consequence, by Lemma \ref{bon},  for every $\gamma_2\in \Gamma_2$, the system $S(\varphi,\Delta\times(\Gamma_1+\gamma_2)) $ is an orthonormal basis for $L^2(G)$. Then, $S(\varphi,\Lambda)$ is a tight frame with bound $\#\Gamma_2=|A|^{2k}$.  

\end{example}

\begin{lemma}\label{lemma: density of sections}
    Let $G$, $A$ and $H$ as in the prior example. Consider $\varphi=\chi_H$, and for $k\in\Z$ let $\Lambda_k$  be a section of $G/H\times \wh{G}/A^{*-k}H^{\perp}$. Then the following holds
    \begin{enumerate}
        \item[(i)] if $k=0$ then $S(\varphi, \Lambda_0)$ is an orthonormal basis for $L^2(G)$,
        \item[(ii)] if $k>0$ then $S(\varphi, \Lambda_k)$ is a tight frame for $L^2(G)$ with constant $|A|^k$,
        \item[(iii)] and if $k<0$ then $S(\varphi, \Lambda_k)$ is an incomplete system for $L^2(G)$. 
    \end{enumerate}
\end{lemma}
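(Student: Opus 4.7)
The plan is to write $\Lambda_k = C \times \Gamma$ with $C$ a section of $G/H$ (chosen so that $0 \in C$) and $\Gamma$ a section of $\wh{G}/A^{*-k} H^\perp$, and to reduce each case to Lemma \ref{bon}. When $k = 0$, $\Gamma$ is already a section of $\wh{G}/H^\perp$, so Lemma \ref{bon} applied directly gives (i).

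For (ii) with $k > 0$, I would exploit that $A^{*-k} H^\perp \subsetneq H^\perp$ has index $|A|^k$ to write $\Gamma = \Gamma_1 + \Gamma_2$, where $\Gamma_1$ is a section of $\wh{G}/H^\perp$ and $\Gamma_2$ is a section of $H^\perp/A^{*-k} H^\perp$ of cardinality $|A|^k$. For each $\gamma_2 \in \Gamma_2$, $\Gamma_1 + \gamma_2$ is again a section of $\wh{G}/H^\perp$, and the identity $M_{\xi_1 + \gamma_2} T_x = M_{\gamma_2} M_{\xi_1} T_x$ shows that $S(\chi_H, C \times (\Gamma_1 + \gamma_2)) = M_{\gamma_2}\, S(\chi_H, C \times \Gamma_1)$ is the image of the orthonormal basis from Lemma \ref{bon} under the unitary $M_{\gamma_2}$, hence itself an orthonormal basis. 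Writing $\Lambda_k$ as the disjoint union of these $|A|^k$ sets and summing the corresponding Parseval identities gives
\[
\sum_{(x,\xi) \in \Lambda_k} |\langle f, M_\xi T_x \chi_H \rangle|^2 = \sum_{\gamma_2 \in \Gamma_2} \|f\|_2^2 = |A|^k \|f\|_2^2
\]
for every $f \in L^2(G)$, which is the tight frame identity with bound $|A|^k$.

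For (iii) with $k < 0$, set $m = -k > 0$, so that $A^{*-k} H^\perp = A^{*m} H^\perp \supsetneq H^\perp$ with index $|A|^m$. I will produce a nonzero $f \in L^2(H)$, extended by zero outside $H$, that is orthogonal to every element of $S(\chi_H, \Lambda_k)$. For $(x, \xi) \in \Lambda_k$ with $x \neq 0$, the supports $x + H$ and $H$ are disjoint cosets of $H$, so $\langle f, M_\xi T_x \chi_H \rangle$ vanishes automatically. For $x = 0$, a direct change of variables identifies $\langle f, M_\xi \chi_H \rangle$ with the Fourier coefficient $(\F_H f)(\xi + H^\perp)$, where $\F_H : L^2(H) \to \ell^2(\wh{G}/H^\perp)$ is the Fourier isometry on $H$. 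The key observation is that $\Gamma$, being a section of the coarser quotient $\wh{G}/A^{*m} H^\perp$, injects into $\wh{G}/H^\perp$ but reaches only $|\wh{G}/H^\perp|/|A|^m$ of its elements; hence there exist frequencies in $\wh{G}/H^\perp$ missed by $\Gamma$, and choosing $\F_H f$ to be a nonzero element of $\ell^2(\wh{G}/H^\perp)$ supported on any such missed frequency yields the desired $f$.

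The only step requiring genuine argument is (iii), whose main subtlety is the careful bookkeeping of how a section of $\wh{G}/A^{*m} H^\perp$ embeds into the finer discrete group $\wh{G}/H^\perp$; the other two cases amount to unitary conjugates of Lemma \ref{bon}.
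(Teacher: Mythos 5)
Your proof is correct and follows essentially the same route as the paper: parts (i) and (ii) are handled exactly as there (apply Lemma \ref{bon} directly, respectively decompose $\Gamma$ into $|A|^k$ translates of a section of $\wh{G}/H^{\perp}$ and sum the resulting Parseval identities). For (iii) the paper simply extends $\Lambda_k$ to a full section of $G/H\times\wh{G}/H^{\perp}$ and notes that a proper subset of the resulting orthonormal basis cannot be complete; the orthogonal vector you construct by hand via the Fourier transform on $H$ is precisely one of the omitted basis elements $M_{\xi_0}\chi_H$, so the two arguments coincide in substance.
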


\begin{proof}
When $k=0$ the set $\Lambda_0$ is a section of $G/H\times\wh{G}/H^{\perp}$ and by Lemma \ref{bon} is an orthonormal basis. If $k>0$ the same procedure as in the prior example can be used to prove that $S(\varphi,\Lambda_k)$ is a tight frame with constant $|A|^k$. For the remaining case, let us remember  $H^{\perp}\subseteq A^{*-k}H^{\perp}$ and choose  $\Gamma$ to be a section of $G/H\times\wh{G}/H^{\perp}$ containing $\Lambda_k$ properly. Since by Lemma \ref{bon} the system  $S(\varphi, \Gamma)$ is an orthonormal basis for $L^2(G)$ and  $S(\varphi, \Lambda)\subsetneq S(\varphi, \Gamma)$, the statement is proven.
\end{proof}

\nocite{*}

\section{Acknowledgements}
	This research was supported by grants: V.P. and R.N. were supported by PICT 2018-3399 (ANPCyT), PICT 2019-03968 (ANPCyT) and CONICET PIP 11220210100087.

%%%%%%%%%%%%%%%%%%%%%%%%%%%%%%%%%%%%%%%%%%%%%

\end{document}